\documentclass[a4paper, twoside, 12pt]{amsart}

\usepackage{amssymb}
\usepackage{amsmath}
\usepackage{amsthm}
\usepackage[abbrev, nobysame]{amsrefs}
\usepackage{enumitem}

\newtheorem{theorem}{Theorem}[section]
\newtheorem{lemma}[theorem]{Lemma}

\newtheorem{proposition}[theorem]{Proposition}
\newtheorem{claim}[theorem]{Claim}
\newtheorem*{claim*}{Claim}
\theoremstyle{definition}
\newtheorem{corollary}[theorem]{Corollary}
\newtheorem{definition}[theorem]{Definition}
\newtheorem{remark}[theorem]{Remark}


\newcommand{\eps}{\varepsilon}


\newcommand{\Db}{\mathbb{D}}

\newcommand{\Cb}{\mathbb{C}}
\newcommand{\Qb}{\mathbb{Q}}
\newcommand{\Zb}{\mathbb{Z}}

\newcommand{\Fl}{\mathcal{F}}

\newcommand{\Det}{\mathrm{Det}}

\begin{document}

\title[Bounded multiplicity of eigenvalues]{Bounded multiplicity for eigenvalues of a circular vibrating clamped plate}
\author{Yuri Lvovsky and Dan Mangoubi}
\address{The Einstein Institute of Mathematics, Edmond J. Safra Campus,
	 The
	Hebrew University of Jerusalem, Jerusalem 91904, Israel}
\email[YL]{yuri.lvovsky@mail.huji.ac.il}
\email[DM]{dan.mangoubi@mail.huji.ac.il}

\begin{abstract}
	We prove that no eigenvalue of the clamped disk can have multiplicity
	greater than six. Our method of proof is based on a new recursion formula, 
	linear algebra arguments and a transcendency theorem due to Siegel and Shidlovskii.
\end{abstract}
\maketitle
\section{Introduction and background}
\subsection{The vibrating membrane}
Recall the Dirichlet eigenvalue problem on the unit disk, $\Db$.
$$(\mathrm{VM})\quad\left\{\begin{matrix}
-\Delta u &=& \lambda u & \text{ in } \Db,\\
u&=&0 &\text{ on }\partial\Db,
\end{matrix}\right.
$$
where $\Delta=\mathrm{div}\circ{\mathrm{grad}}$ is the (analyst's) Laplacian.
The eigenfunctions and the corresponding eigenvalues 
are given in terms of Bessel functions of the first kind~$J_m$  and their positive zeros~$j_{m,k}$.
Indeed, it is straightforward to check that
\begin{equation} 
  \label{eqn:vm-basis}
  u_{m, k}(r,\phi):=J_m(j_{m,k}r)e^{im\phi}
\end{equation}
is an eigenfunction
of eigenvalue~$\lambda=j_{m, k}^2$. Fourier expansion shows that any eigenfunction
is a linear combination of functions~$u_{m,k}$ 
\cite{courant-hilbert-I}*{Ch.\ V\S5}.
On the other hand, to determine
which linear combinations of the basic eigenfunctions in~\eqref{eqn:vm-basis} still remain eigenfunctions had been a difficult problem,  until it was resolved by Siegel~\cite{sieg-1929} in his
celebrated theorem showing that the multiplicity of  the eigenvalue $j_{m,k}^2$ is either  
 one (in case $m=0$) or two (in case $m\neq 0$). 
This was coined as Bourget's Hypothesis before Siegel's Theorem.

We recall the line of proof of Bourget's hypothesis.
First, (see~\cite{wats}*{Ch.\ 15.28}) using a well known (length two) recursion
formula for Bessel functions and their  second order ODEs 
it was shown that if $j_{m, k}=j_{m', k'}$, then either $m=m'$ or
$j_{m, k}$ is algebraic.
In a second much deeper step it was shown by Siegel \cite{sieg-1929} (see also~\cite{sieg-1949}) that
all positive zeros of Bessel functions are transcendental.

\subsection{The vibrating clamped plate}
In this paper we are interested in the vibrating clamped circular plate~(\cite{courant-hilbert-I}*{Ch.\ V\S6}). This is the following fourth order eigenvalue problem.
$$\mathrm{(VP)}\quad\left\{\begin{matrix}
\Delta^2 u &=& \lambda u & \text{ in } \Db,\\
u &=& 0 & \text{ on } \partial\Db ,\\
\partial_n u &=& 0&\text{ on }\partial\Db .
\end{matrix}\right.
$$

Similarly to the vibrating circular membrane, it is readily checked that 
$$u_{m, k}(r, \phi)=\big(I_m(w_{m,k})J_m(w_{m,k}r)-J_m(w_{m,k})I_m(w_{m,k}r)\big)e^{im\phi}$$ is an eigenfunction of eigenvalue $\lambda=w_{m, k}^4$,  where~$w_{m,k}$ is a zero
of the cross product
\begin{equation}
\label{eqn:Wm-def}
  W_m:=I_m'J_m-I_m J_m'\ ,
\end{equation}
and where $I_m$ is the modified Bessel function.

As~in Problem~(VM), it is natural to ask whether multiplicities occur.
There is extensive literature studying the vibrating clamped plate
problem in general domains. The main questions studied are the isoperimetric problem,
eigenvalues inequalities, asymptotic distribution of eigenvalues and the positivity of the ground state
(see e.g.~\citelist{\cite{nad95}\cite{talenti}\cite{ashb-beng}\cite{ashbaugh-laugesen}\cite{levine-protter}\cite{cheng-wei}\cite{pleijel-plates}\cite{payne-polya-weinb}\cite{garabedian-conformal}\cite{duffin}\cite{hedenmalm}\cite{hadamard}}).
It seems that the question of multiplicity of eigenvalues for the circular plate has not been addressed so far, and it is still not known
whether eigenvalues
are of multiplicity at most two (see in this context Theorem~\ref{thm:w0-w1-zero-forbidden}).
From Weyl's law \cite{courant-hilbert-I}*{Ch.\ VI\S7.4} readily it follows
that the multiplicity of the~$k$-th eigenvalue $m(\lambda_k)=o(k)$ as $k\to\infty$.
In this paper we follow the line of proof for the bounded multiplicity of
the eigenvalues of the vibrating membrane, and we adapt it to deal
with the eigenvalues of the clamped plate problem.
The main new ingredient is a recursion formula for the sequence of cross products~$W_m$. Although this sequence was extensively studied~\cite{lorch-monotonicity}
we could not find this recursion in the existing literature.
 Further, it turns out that this recursion (of length four)  has nice grading and non-cancellation properties which  allow
to adjust the linear algebra and ODE arguments
in the proof for the vibrating membrane case to our case. When combined with  Siegel-Shidlovskii Theory
(see~\cite{shid-book}) it yields
 \begin{theorem}
 	\label{thm:wm-zeros}
 	Let $m_0, m_1, m_2, m_3$ be four distinct non-negative integers.
 	 There is no~$x_0>0$ for which
 	 $W_{m_0}(x_0)=W_{m_1}(x_0)=W_{m_2}(x_0)=W_{m_3}(x_0)=0$.
 \end{theorem}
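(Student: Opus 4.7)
The plan is to adapt the two-step strategy behind Bourget's hypothesis. First, use the new length-four recursion and linear algebra to show that any common positive zero $x_0$ of four distinct $W_{m_i}$ must be algebraic over $\mathbb{Q}$; then invoke Siegel--Shidlovskii to exclude this.

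The first task is to record the length-four recursion for $W_m$. Combining the standard three-term $m$-recurrences for $J_m$ and $I_m$ (which differ only by a sign) with the definition $W_m=I'_mJ_m-I_mJ'_m$ should, after algebraic simplification, yield an identity of the form
\[
A_m(x)W_{m+3}(x)+B_m(x)W_{m+2}(x)+C_m(x)W_{m+1}(x)+D_m(x)W_m(x)=0,
\]
with $A_m,B_m,C_m,D_m\in\mathbb{Q}[x,1/x]$ explicit. Provided $A_m$ does not vanish on $(0,\infty)$ — the promised \emph{non-cancellation} — one can iterate to express, for every $m\geq 4$,
\[
W_m(x)=p_m^{(0)}(x)W_0(x)+p_m^{(1)}(x)W_1(x)+p_m^{(2)}(x)W_2(x)+p_m^{(3)}(x)W_3(x),
\]
with $p_m^{(j)}\in\mathbb{Q}(x)$ satisfying the \emph{grading} pattern alluded to in the introduction.

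Suppose now, for contradiction, that $W_{m_0}(x_0)=W_{m_1}(x_0)=W_{m_2}(x_0)=W_{m_3}(x_0)=0$ at some $x_0>0$, with $0\leq m_0<m_1<m_2<m_3$. After dealing with the small-index cases $m_0<4$ separately using the order-four linear ODE satisfied by each $W_m$, I may assume $m_0\geq 4$. The four vanishing conditions then form a homogeneous $4\times 4$ linear system
\[
M(x_0)\,\mathbf{v}=0,\qquad M(x)_{ij}=p_{m_i}^{(j)}(x),\qquad \mathbf{v}=\bigl(W_0(x_0),W_1(x_0),W_2(x_0),W_3(x_0)\bigr)^{\!\top}.
\]
The grading structure of the $p_m^{(j)}$ is used to prove that $\det M(x)\in\mathbb{Q}(x)$ is not identically zero. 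If $\det M(x_0)\neq 0$, then $\mathbf{v}=0$, a four-consecutive vanishing configuration that, combined with the fourth-order ODE of $W_m$ and the behavior of $W_m$ near $0$, can be ruled out directly. Otherwise, $x_0$ is a zero of the non-trivial polynomial in $x$ obtained from $\det M$, hence algebraic over $\mathbb{Q}$.

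The conclusion then comes from Siegel--Shidlovskii. The functions $J_{m_i},J'_{m_i},I_{m_i},I'_{m_i}$ are $E$-functions satisfying a linear system with coefficients in $\mathbb{Q}(x)$, and the simultaneous vanishing of four distinct $W_{m_i}$ at a non-zero algebraic $x_0$ translates into too many algebraic relations among their values, violating the theorem. The main obstacle I expect is the non-degeneracy claim $\det M(x)\not\equiv 0$: this is precisely where the novel grading and non-cancellation properties of the length-four recursion do essential work, and it is the step with no direct counterpart in Watson's classical proof for the membrane case.
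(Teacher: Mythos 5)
Your proposal follows the same architecture as the paper: derive a length-four rational recursion for $W_m$, express each $W_{m_i}$ over $\mathbb{Q}(x)$ as a combination of $W_0,\dots,W_3$, form the $4\times 4$ matrix $M$, argue $\det M\not\equiv 0$, conclude $x_0$ is algebraic, and contradict that with Siegel--Shidlovskii for the $E$-functions $J_m,J_m',I_m,I_m'$. Your proposed elimination of the case $W_0(x_0)=\dots=W_3(x_0)=0$ via a fourth-order ODE for $W_0$ is also a valid route, and in fact the paper mentions it as an alternative to the argument it actually uses, which instead exploits positivity of $I_m$ on $(0,\infty)$ together with the identities $W_m=I_{m+1}J_m+I_mJ_{m+1}$ and $W_m=I_{m-1}J_m-I_mJ_{m-1}$ to force $J_m(x_0)=J_{m+1}(x_0)=0$, impossible by the second-order ODE for $J_m$. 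Your separate treatment of $m_0<4$ is unnecessary — setting $p_m^{(j)}=\delta_{mj}$ for $m\in\{0,1,2,3\}$ makes the system well defined uniformly — but not incorrect.

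The genuine gap is that you never establish $\det M\not\equiv 0$, and this is not a technical footnote but the heart of the matter: if $\det M$ vanished identically you would obtain no constraint on $x_0$ at all. The paper's Lemma on linear independence proves exactly this, and it is by no means a routine calculation. It sets up a free $4$-dimensional $\mathbb{Q}(z)$-module on which the recursion acts formally, parametrizes distinct indices as $m_0=j$, $m_1=1+j+k$, $m_2=2+j+k+l$, $m_3=3+j+k+l+m$, and proves by induction on $j+k+l+m$ that the antisymmetric four-linear form $(F_{m_0},\dots,F_{m_3})$ has leading term $(-1)^m B_{jklm}z^{-2(k+2\lfloor l/2\rfloor+m)}$ with $B_{jklm}>0$; the sign and degree bookkeeping forces about fifteen base and parity cases because the four terms produced by one application of the recursion can collide in degree, and positivity of the $B$'s is what prevents cancellation. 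Also, your worry that $A_m$ might vanish on $(0,\infty)$ is misplaced: in the actual recursion the coefficient of $W_{m+4}$ is $1$, so iteration is never obstructed; the non-cancellation that matters is inside the determinant, where you attribute it to "grading" but supply no mechanism. You correctly name this the main obstacle, but identifying the obstacle is not the same as overcoming it, and without it the proof does not close.
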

As a main corollary we obtain
\begin{corollary}
	Let $\lambda$ be an eigenvalue of Problem $\mathrm{(VP)}$.
	Then, $\lambda$ is of  multiplicity at most six.
\end{corollary}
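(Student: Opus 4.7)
The plan is to reduce the corollary to Theorem~\ref{thm:wm-zeros} via Fourier analysis in the angular variable. Writing $\lambda = w^4$ for some $w>0$, any eigenfunction $u$ of $\mathrm{(VP)}$ expands as
\[
u(r,\phi) = \sum_{m \in \Zb} f_m(r)\, e^{im\phi}.
\]
Since $\Delta^2$ commutes with rotations, each angular mode $f_m(r)e^{im\phi}$ is itself an eigenfunction for the same eigenvalue~$\lambda$. Thus the $\lambda$-eigenspace decomposes orthogonally across $m \in \Zb$, and it suffices to bound the number of indices $m$ for which $f_m \not\equiv 0$.

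For each~$m$, I would argue that regularity at the origin forces $f_m$ to lie in the two-dimensional span of $J_m(wr)$ and $I_m(wr)$ (the other two independent solutions, of Bessel $Y_m$ and $K_m$ type, being singular at~$0$). Writing $f_m(r) = a_m J_m(wr) + b_m I_m(wr)$, the clamped boundary conditions $f_m(1) = 0 = f_m'(1)$ become a homogeneous $2\times 2$ linear system in $(a_m, b_m)$ whose determinant is $\pm W_m(w)$. Hence $f_m \not\equiv 0$ forces $W_m(w) = 0$, and in that case $f_m$ is determined uniquely up to a scalar. Each admissible $m \in \Zb$ therefore contributes exactly one dimension to the eigenspace.

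Finally, using the standard identities $J_{-m} = (-1)^m J_m$ and $I_{-m} = I_m$ for integer~$m$, one checks that $W_{-m}(w) = (-1)^m W_m(w)$, so the set $S := \{m \in \Zb : W_m(w) = 0\}$ is symmetric under $m \mapsto -m$. Let $N := \#\{m \geq 0 : W_m(w) = 0\}$; then $|S| \leq 2N$ (with equality when $0 \notin S$, and $|S| = 2N-1$ when $0 \in S$). By Theorem~\ref{thm:wm-zeros} we have $N \leq 3$, so the multiplicity of $\lambda$ is at most $2N \leq 6$.

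The whole substance of the result lives in Theorem~\ref{thm:wm-zeros}; what remains is essentially a separation-of-variables bookkeeping. The only points that need genuine verification (but are standard) are the exclusion of $Y_m, K_m$ by the $H^2$-regularity required of an eigenfunction at the origin, and the sign-symmetry $W_{-m} = \pm W_m$ used to pass from indices in $\Zb$ to indices in $\Zb_{\geq 0}$.
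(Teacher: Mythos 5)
Your proof is correct and follows the route the paper implicitly relies on: separate variables, reduce the radial problem in each angular mode~$m$ to a $2\times2$ homogeneous system whose determinant is $w\,W_m(w)$, note that $W_{-m}=(-1)^m W_m$ so each eigenvalue-index $m\geq 1$ contributes two complex dimensions (and $m=0$ contributes one), and then invoke Theorem~\ref{thm:wm-zeros} to bound the number of non-negative indices with $W_m(w)=0$ by three, giving multiplicity at most $2\cdot 3=6$. The paper states the corollary without spelling out these separation-of-variables bookkeeping steps, so your write-up is a faithful expansion of what the authors take for granted; all the points you flag as ``standard but to be verified'' (exclusion of $Y_m,K_m$ by regularity at the origin, one-dimensional kernel since $I_m>0$ rules out a vanishing coefficient matrix, the reflection symmetry of the index set) are indeed correct.
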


\begin{remark}
	One can check that the ground state of the disk is of multiplicity one (see \cite{lorch-monotonicity}). 
\end{remark}

 
\subsection{Acknowledgements}
We first learned about the clamped plate problem from Iosif Polterovich.
We are very grateful to Iosif for introducing us to this problem, and explaining to us the beauty and subtle points of several surrounding questions.
We would like to thank Enrico Bombieri, who explained to us some ideas of transcendental number theory.
This manuscript also benefited from several interesting discussions with 
Lev Buhovski, Aleksandr Logunov, Eugenia Malinnikova, Guillaume Roy-Fortin,  and Mikhail Sodin.
This paper is part of the PhD thesis of the first author.
The support of the Israel Science Foundation through grants nos.\ 753/14 and 681/18 is gratefully acknowledged.
Part of this work was written while the second author was an invited
researcher of the LabEx Math\'ematiques Hadamard project in Paris-Sud XI and a Chateaubriand France-Israel fellow.
The financial supports of the LMH and the french government
are gratefully acknowledged.

\section{Classical facts about Bessel functions}
Let~$m$ be a non-negative integer.
The Bessel function~$J_m$ can be defined as the  entire function
satisfying
$$J_m''(z)+\frac{1}{z}J_m'(z)-\left(\frac{m^2}{z^2}-1\right)J_m(z)=0$$
and normalized by 
$$J_m(z)=\frac{1}{m!}\left(\frac{z}{2}\right)^m + o(z^m) \mbox{ as } z\to 0\ .$$
The modified Bessel function~$I_m$ is the entire solution of
$$I_m''(z)+\frac{1}{z}I_m'(z)-\left(\frac{m^2}{z^2}+1\right)I_m(z)=0$$
normalized so that
$$I_m(z)=\frac{1}{m!}\left(\frac{z}{2}\right)^m+o(z^m) \mbox{ as } z\to 0\ .$$

It is easily verified that $I_m(z)=(-i)^m J_m(i z)$, where $i$~is a square root of~$-1$.
\begin{proposition}[\cite{wats}*{Ch.\ II.12}]
	\label{prop:bessel-recursions}
	The Bessel functions satisfy the following rules:
	\begin{equation}
	\begin{aligned}
	J_m'(z)-\frac{m}{z}J_m(z)&=-J_{m+1}\\
	J_m'(z)+\frac{m}{z}J_m(z)&=J_{m-1}\\
	I_m'(z)-\frac{m}{z}I_m(z)&=I_{m+1}\\
	I_m'(z)+\frac{m}{z}I_m(z)&=I_{m-1}
	\end{aligned}
	\end{equation}
\end{proposition}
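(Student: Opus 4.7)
The plan is to verify these four identities directly from the power-series definitions of $J_m$ and $I_m$ given in the preceding paragraphs. The four relations come in two pairs, and within each pair the $I_m$ identity can be derived from the $J_m$ identity via the conjugation $I_m(z)=(-i)^m J_m(iz)$ recorded just before the proposition. So my first reduction is to establish only the two identities for $J_m$.

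For the $J_m$ identities the key observation is that differentiating $z^{-m}J_m(z)$ and $z^{m}J_m(z)$ termwise — using the normalizing series
$$J_m(z) = \sum_{k=0}^{\infty} \frac{(-1)^k}{k!\,(m+k)!}\left(\frac{z}{2}\right)^{m+2k}$$
— and performing a single reindexing of the summation identifies the two derivatives as $-z^{-m}J_{m+1}(z)$ and $z^{m}J_{m-1}(z)$ respectively. Applying the Leibniz rule on the left and dividing out the power of $z$ then immediately yields the first two identities of the proposition.

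To pass to the $I_m$ identities, I would substitute $z\mapsto iz$ in the two $J_m$ identities, use $I_m'(z)=(-i)^{m-1}J_m'(iz)$, and multiply through by an appropriate power of $i$. The four-fold periodicity of $(-i)^m$ — in particular the relation $(-i)^{m-1}=-(-i)^{m+1}$ — turns the minus sign in front of $J_{m+1}$ into a plus sign in front of $I_{m+1}$, while the other identity is transported without sign change. This step, together with the two $J_m$ identities, exhausts the proposition.

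The only real obstacle is bookkeeping: tracking signs and factorials when reindexing the series for $J_m$, and tracking powers of $i$ in the $J\to I$ passage. A cleaner alternative that avoids the complex substitution altogether is to repeat the termwise-differentiation argument directly on the series of $I_m$, which differs from that of $J_m$ only by deletion of the alternating factor $(-1)^k$; the absence of this factor is precisely what is needed to flip $-J_{m+1}\mapsto +I_{m+1}$ while leaving $J_{m-1}\mapsto I_{m-1}$ unchanged, yielding the third and fourth identities in exact parallel with the first two.
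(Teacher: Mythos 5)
Your verification is correct: the paper itself gives no proof of this proposition, simply citing Watson's treatise, and your termwise differentiation of $z^{\mp m}J_m(z)$ followed by a single reindexing is the standard argument given there. Both of your routes to the $I_m$ identities are sound --- the substitution $z\mapsto iz$ with the bookkeeping $(-i)^{m-1}=-(-i)^{m+1}$ does flip the sign in front of the $m+1$ term as you claim, and repeating the series computation directly for $I_m$ (whose series merely lacks the $(-1)^k$) gives the same conclusion more transparently.
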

The next positivity statement (which can be readily seen from the Taylor series expansion) will also be useful.
\begin{proposition}
	The function~$I_m$ is positive in~$(0,\infty)$.
\end{proposition}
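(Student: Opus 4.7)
The statement asks only for positivity of $I_m$ on $(0,\infty)$, and the parenthetical hint points to the Taylor series. So the plan is entirely elementary.

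First, I would write down the power series for $I_m$ explicitly. Using the relation $I_m(z) = (-i)^m J_m(iz)$ recorded just before the proposition, together with the classical expansion
\begin{equation*}
J_m(z) \;=\; \sum_{k=0}^{\infty} \frac{(-1)^k}{k!\,(m+k)!}\left(\frac{z}{2}\right)^{2k+m},
\end{equation*}
the factors $i^{2k+m}$ combine with $(-i)^m$ to produce $(-1)^k \cdot i^{2k} = 1$ in each summand, so that
\begin{equation*}
I_m(z) \;=\; \sum_{k=0}^{\infty} \frac{1}{k!\,(m+k)!}\left(\frac{z}{2}\right)^{2k+m}.
\end{equation*}
Alternatively, I would derive the same series directly by plugging $I_m(z) = \sum a_k z^{2k+m}$ into the modified Bessel ODE from the text, using the stated normalization $a_0 = 1/(m!\,2^m)$, and solving the resulting two-term recursion for $a_k$; the recursion immediately gives positive coefficients by induction.

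Once the series is in hand, positivity on $(0,\infty)$ is immediate: every coefficient is strictly positive, and for $z > 0$ every power $(z/2)^{2k+m}$ is strictly positive, so the sum is strictly positive. The only substantive step is really the identification of the Taylor coefficients, and there is no genuine obstacle — the main care is just bookkeeping of the factor $(-i)^m i^{2k+m}$ when passing from $J_m(iz)$ to $I_m(z)$.
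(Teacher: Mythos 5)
Your argument is correct and follows exactly the route the paper indicates (the parenthetical ``readily seen from the Taylor series expansion''): the power series of $I_m$ has all nonnegative coefficients with the leading one strictly positive, so $I_m(z)>0$ for $z>0$. Nothing further is needed.
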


\section{A recursion formula for a cross product of Bessel functions}
As explained in the introduction, the eigenvalues of the clamped plate problem
are given in terms of zeros of the functions~$W_m$ defined in~\eqref{eqn:Wm-def}.
In this section we study this sequence and we present a length four
rational recursion relation satisfied by it.
We prove
\begin{theorem}
	\label{thm:recursion}
	The following  recursion formula holds.
	\begin{equation*}
	\begin{aligned}
	W_{m+2}(z)+W_{m+4}(z)  =& \frac{4(m+2)(m+3)}{z^2}
	\big(W_{m+1}(z)-W_{m+3}(z)\big)+\\&
	-\frac{m+3}{m+1}\big(W_{m}(z)+W_{m+2}(z)\big)\ .
	\end{aligned}
	\end{equation*}
\end{theorem}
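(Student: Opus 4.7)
The plan is to convert every $W_m$ in the identity into products of Bessel functions without derivatives, reduce the problem to a single length-three expansion of the diagonal products $I_k J_k$, and use a ``conversion lemma'' that turns each such diagonal product back into a pair of $W$'s.

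First I would substitute the four ladder relations of Proposition~\ref{prop:bessel-recursions} into the definition $W_m = I_m' J_m - I_m J_m'$. The $m/z$-terms cancel in each substitution, yielding the two presentations
\[
  W_m \;=\; I_{m+1} J_m + I_m J_{m+1} \;=\; I_{m-1} J_m - I_m J_{m-1}.
\]
The same ladder identities also give, by adding and subtracting the two forms for $J_m'$ and for $I_m'$, the classical three-term recurrences
\[
  J_{k+1} + J_{k-1} \;=\; \frac{2k}{z} J_k, \qquad I_{k+1} - I_{k-1} \;=\; -\frac{2k}{z} I_k.
\]
Writing $W_m$ in the right-ladder form and $W_{m+2}$ in the left-ladder form, and eliminating $I_{m+2}$ and $J_{m+2}$ via these three-term recurrences at index $m+1$, the mixed products $I_{m+1} J_m$ and $I_m J_{m+1}$ cancel and one is left with the key identity
\[
  z \bigl( W_m + W_{m+2} \bigr) \;=\; 4(m+1)\, I_{m+1} J_{m+1}.
\]

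Next I would expand $I_{m+3} J_{m+3}$ by applying the three-term recurrences to both factors at index $m+2$. The cross terms reassemble into $I_{m+1} J_{m+2} + I_{m+2} J_{m+1} = W_{m+1}$, while the diagonal contributions leave $-I_{m+1} J_{m+1}$ and $-(4(m+2)^2/z^2)\, I_{m+2} J_{m+2}$, giving
\[
  I_{m+3} J_{m+3} \;=\; \frac{2(m+2)}{z} W_{m+1} - I_{m+1} J_{m+1} - \frac{4(m+2)^2}{z^2}\, I_{m+2} J_{m+2}.
\]
Three applications of the key identity then replace $I_{m+k} J_{m+k}$ by $(z/(4(m+k)))\,(W_{m+k-1} + W_{m+k+1})$ for $k=1,2,3$. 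After multiplying through by $4(m+3)/z$ and collecting the coefficients of $W_{m+1}$ and $W_{m+3}$, the stated recursion drops out.

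The main obstacle is the key identity. It requires spotting the correct pairing of ladder directions --- right for $W_m$, left for $W_{m+2}$ --- so that every off-diagonal product $I_a J_b$ with $a \neq b$ cancels and only the single diagonal term $I_{m+1} J_{m+1}$ survives on the right. Once this telescoping device is available, the length-four recursion for $W_m$ emerges from a single length-three expansion of $I_{m+3} J_{m+3}$ together with three linear substitutions; no further non-trivial manipulation is needed.
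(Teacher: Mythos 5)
Your proof is correct and all the calculations check out: the key identity $z\bigl(W_m+W_{m+2}\bigr)=4(m+1)I_{m+1}J_{m+1}$ is exactly Lemma~\ref{lem:Wm-formulas}\ref{eq:w+w} shifted by one, and substituting $I_{m+k}J_{m+k}=\tfrac{z}{4(m+k)}(W_{m+k-1}+W_{m+k+1})$ for $k=1,2,3$ into your expansion of $I_{m+3}J_{m+3}$ does reproduce the stated recursion after multiplying by $4(m+3)/z$. The route is close to the paper's but not identical. The paper also reduces the outer terms to $\tfrac{4(m+3)}{z}I_{m+3}J_{m+3}$ and $\tfrac{4(m+3)}{z}I_{m+1}J_{m+1}$ via part~\ref{eq:w+w}, but it then invokes the derivative identity $W_{m-1}-W_{m+1}=2(I_mJ_m)'$ (part~\ref{eq:w-w}) to rewrite the middle term $\tfrac{4(m+2)(m+3)}{z^2}(W_{m+1}-W_{m+3})$ as $\tfrac{8(m+2)(m+3)}{z^2}(I_{m+2}J_{m+2})'$, and the whole theorem collapses to the single derivative identity $I_{m+1}J_{m+1}+I_{m-1}J_{m-1}=\tfrac{2m}{z}(I_mJ_m)'$, verified by ladder formulas. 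You avoid derivatives altogether: you expand $I_{m+3}J_{m+3}$ via the three-term recurrences, recognize the cross terms as $W_{m+1}$ via part~\ref{eq:wm+}, and convert the remaining diagonal products back to $W$'s via part~\ref{eq:w+w}. The two verifications are comparable in length and both are elementary ladder manipulations; the paper's version is slightly more compact once~\ref{eq:w-w} is in hand, whereas yours displays more transparently how the rational coefficients $\tfrac{4(m+2)(m+3)}{z^2}$ and $-\tfrac{m+3}{m+1}$ emerge from the three-term recurrence coefficients.
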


For the proof we need some convenient formulas given in the next lemma,
proved at the end of this section.
\begin{lemma}
	\label{lem:Wm-formulas}
	The following formulas hold.
	\begin{enumerate}[label=\textup{(\alph*)}]
	  \item \ \ $W_m = \ I_{m+1}J_m+I_m J_{m+1}$\label{eq:wm+}
	  \item \ \ $W_m = \ I_{m-1} J_m-I_m J_{m-1}$\label{eq:wm-}
	  \item \ \ $W_{m-1}(z)+W_{m+1}(z) = \ \frac{4m}{z}I_m(z) J_m(z)$\label{eq:w+w}
	\item \ \ $W_{m-1}-W_{m+1} = \ 2(I_m J_m)'$\label{eq:w-w}
	\item \ \ $W_{m}+W_{m+1} = \ 2I_m J_{m+1}$\label{eq:w0+w1}
	\item \ \ $W_{m}-W_{m+1} = \  2I_{m+1}J_m$\label{eq:w0-w1}	
	\end{enumerate}
\end{lemma}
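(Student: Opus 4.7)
The plan is to derive (a) and (b) directly from the definition $W_m=I_m'J_m-I_mJ_m'$ by substituting the two equivalent expressions for $J_m'$ and $I_m'$ supplied by Proposition~\ref{prop:bessel-recursions}, and then to obtain (c)--(f) by elementary algebraic combinations of (a) and (b) together with the same recursion relations. No integration, no ODE theory, and no analytic input beyond Proposition~\ref{prop:bessel-recursions} is needed; the whole lemma is a bookkeeping exercise in the four identities
\begin{equation*}
J_m'=\tfrac{m}{z}J_m-J_{m+1},\ J_m'=-\tfrac{m}{z}J_m+J_{m-1},\ I_m'=\tfrac{m}{z}I_m+I_{m+1},\ I_m'=-\tfrac{m}{z}I_m+I_{m-1}.
\end{equation*}

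For (a) I would plug the ``$m\!+\!1$'' versions $J_m'=(m/z)J_m-J_{m+1}$ and $I_m'=(m/z)I_m+I_{m+1}$ into $W_m=I_m'J_m-I_mJ_m'$; the $(m/z)I_mJ_m$ terms cancel, leaving $W_m=I_{m+1}J_m+I_mJ_{m+1}$. For (b) I would do the same with the ``$m\!-\!1$'' versions $J_m'=-(m/z)J_m+J_{m-1}$ and $I_m'=-(m/z)I_m+I_{m-1}$; again the $(m/z)I_mJ_m$ pieces cancel and I obtain $W_m=I_{m-1}J_m-I_mJ_{m-1}$.

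Having (a) and (b), the remaining identities are purely formal. For (c) I write $W_{m-1}$ via (a) (shifted by $-1$) as $I_mJ_{m-1}+I_{m-1}J_m$ and $W_{m+1}$ via (b) (shifted by $+1$) as $I_mJ_{m+1}-I_{m+1}J_m$; adding and grouping yields $I_m(J_{m-1}+J_{m+1})+J_m(I_{m-1}-I_{m+1})$, and the identities $J_{m-1}+J_{m+1}=(2m/z)J_m$ and $I_{m-1}-I_{m+1}=(2m/z)I_m$ (immediate from Proposition~\ref{prop:bessel-recursions} by adding or subtracting the appropriate pair) give $(4m/z)I_mJ_m$. For (d) the same two expansions produce $I_m(J_{m-1}-J_{m+1})+J_m(I_{m-1}+I_{m+1})=2I_mJ_m'+2J_mI_m'=2(I_mJ_m)'$. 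Finally, (e) and (f) follow by simply adding and subtracting the expressions for $W_m$ provided by (a) and for $W_{m+1}$ provided by (b): the $I_{m+1}J_m$ terms either reinforce or cancel, giving $2I_mJ_{m+1}$ and $2I_{m+1}J_m$ respectively.

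There is no genuine obstacle here; the only thing to watch is consistency of indices in the shifted forms of (a) and (b), which is why I would do (a) and (b) first and only then rewrite $W_{m\pm 1}$ using whichever of (a) or (b) produces the cross term needed for the sum or difference on the left-hand side.
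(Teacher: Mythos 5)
Your proof is correct and follows essentially the same route as the paper: items (a) and (b) by substituting the recursions of Proposition~\ref{prop:bessel-recursions} into $W_m=I_m'J_m-I_mJ_m'$ (the paper does the same substitution in the opposite direction, starting from the right-hand side), and items (c)--(f) by expressing $W_{m\pm1}$ and $W_m, W_{m+1}$ via (a) and (b) at shifted indices and then simplifying with the same recursion relations.
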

\begin{proof}[Proof of Theorem~\ref{thm:recursion}]
	For convenience  we denote the formula to be proved as $A=B-C$ where $A$~is the left hand side and $B, C$ correspond respectively to the two terms in the right hand side.
	 By Lemma~\ref{lem:Wm-formulas} we have
	 \begin{equation*}
	 \begin{aligned}
	  A&=\frac{4(m+3)}{z}I_{m+3}(z)J_{m+3}(z)\ ,\\
	  B&=\frac{4(m+2)(m+3)}{z^2}2(I_{m+2}J_{m+2})'(z)\\
	  C&=4\frac{m+3}{z}I_{m+1}(z)J_{m+1}(z)\ .
	  \end{aligned}
	  \end{equation*}
	Hence, the statement $A+C=B$ is equivalent to
	\begin{equation*}
	I_{m+1}(z)J_{m+1}(z)+I_{m-1}(z)J_{m-1}(z)
	=\frac{2m}{z}(I_{m}J_{m})'(z)\ .
	\end{equation*}
	The last identity can be easily validated by expressing $I_{m+1}$,~$I_{m-1}$,~$J_{m+1}$ and~$J_{m-1}$ in terms of~$I_m'$,~$I_m$,~$J_m'$ and~$J_m$ with the rules given in Proposition~\ref{prop:bessel-recursions}.
\end{proof}

\begin{proof}[Proof of Lemma~\ref{lem:Wm-formulas}]
	To prove~\ref{eq:wm+} we use the rules in Proposition~\ref{prop:bessel-recursions}
	to obtain
	\begin{equation*}
	\begin{aligned}
	I_{m+1}(z)&J_m(z)+I_m(z)J_{m+1}(z)=\\
	& \left(  I_m'(z)-\frac{m}{z}I_m(z) \right) J_m(z)-
	I_m(z)\left(J_{m}'(z)-\frac{m}{z}J_m(z)\right)=\\
	& \ \  I_m'(z)J_m(z)-I_m(z)J_m'(z)=W_m(z)\ .
	\end{aligned}
	\end{equation*}
	Formula~\ref{eq:wm-} is proved similarly.
	To prove~\ref{eq:w+w} we express $W_{m-1}$ using formula~\ref{eq:wm+},
	while~$W_{m+1}$ using formula~\ref{eq:wm-}. Then, we get
	$$W_{m-1}+W_{m+1}=I_m(J_{m-1}+J_{m+1})+(I_{m-1}-I_{m+1})J_m\ .$$
	At the next step we express $J_{m-1}$,~$J_{m+1}$,~$I_{m-1}$ and~$I_{m+1}$
	in terms of the functions $J_m'$,~$J_m$,~$I_m'$ and~$I_m$ using Proposition~\ref{prop:bessel-recursions}.
	The proof of~\ref{eq:w-w} is similar.
	To prove~\ref{eq:w0+w1} and~\ref{eq:w0-w1} one uses~\ref{eq:wm+} and~\ref{eq:wm-}.
\end{proof}

\section{Forbidden joint zeros}
In this section we observe some forbidden patterns of joint zeros
in the sequence $W_m$. Observe that the forbidden patterns in Theorem~\ref{thm:w0-w1-zero-forbidden} are not covered
by Theorem~\ref{thm:wm-zeros}.
\begin{theorem}
	\label{thm:w0-w1-zero-forbidden}
	The patterns of joint zeros below are forbidden.
	\begin{enumerate}[label=\textup{({\alph*})}]
	\item \label{thm-part-a} The functions $W_m$ and $W_{m+1}$ have no joint positive zeros.
	\item \label{thm-part-b} The functions $W_m$ and $W_{m+2}$ have no joint positive zeros.
	\end{enumerate}
 	\begin{proof}
 	  Since~$I_m$ is a positive function in~$(0,\infty)$, we can deduce from Lemma~\ref{lem:Wm-formulas}\ref{eq:wm+} and~\ref{eq:wm-} that if
 		$W_m(x_0)=W_{m+1}(x_0)=0$ then $J_m(x_0)=J_{m+1}(x_0)=0$.
 		This is impossible as it implies $J_m(x_0)=J_m'(x_0)=0$, which is forbidden
 		by the second order ODE satisfied by~$J_m$.
 		To prove~\ref{thm-part-b} we use Lemma~\ref{lem:Wm-formulas}\ref{eq:w+w} and~\ref{eq:w-w} and the fact that $I_m$
 		is a positive function in~$(0,\infty)$ to deduce a similar contradiction.
 	\end{proof}
\end{theorem}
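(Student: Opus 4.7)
The plan is to exploit the linear identities in Lemma~\ref{lem:Wm-formulas} together with the positivity of $I_m$ on $(0,\infty)$ to reduce each forbidden coincidence of zeros to a double zero of an ordinary Bessel function $J_m$, which is forbidden by uniqueness for the second-order ODE $J_m''+z^{-1}J_m'-(m^2/z^2-1)J_m=0$ away from the origin.

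For part~\ref{thm-part-a}, I would read off the identities~\ref{eq:w0+w1} and~\ref{eq:w0-w1}, namely $W_m+W_{m+1}=2I_mJ_{m+1}$ and $W_m-W_{m+1}=2I_{m+1}J_m$. A joint zero $x_0>0$ of $W_m$ and $W_{m+1}$ forces both right-hand sides to vanish at $x_0$, and since $I_m$ and $I_{m+1}$ are strictly positive on $(0,\infty)$, this gives $J_m(x_0)=J_{m+1}(x_0)=0$. The raising--lowering rule $J_m'-\tfrac{m}{z}J_m=-J_{m+1}$ from Proposition~\ref{prop:bessel-recursions} then yields $J_m'(x_0)=0$, so $J_m$ has a double zero at $x_0$, contradicting its ODE.

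For part~\ref{thm-part-b}, I would apply identities~\ref{eq:w+w} and~\ref{eq:w-w} after shifting the index $m\mapsto m+1$, obtaining $W_m+W_{m+2}=\tfrac{4(m+1)}{z}I_{m+1}J_{m+1}$ and $W_m-W_{m+2}=2(I_{m+1}J_{m+1})'$. A joint zero $x_0>0$ combined with the positivity of $I_{m+1}$ yields $J_{m+1}(x_0)=0$; expanding $(I_{m+1}J_{m+1})'(x_0)=0$ and discarding the $I_{m+1}'J_{m+1}$ term (which is already $0$) forces $J_{m+1}'(x_0)=0$, and again $J_{m+1}$ would have a double zero at $x_0$, which is impossible.

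There is essentially no technical obstacle here; the whole argument is a short bookkeeping exercise in Lemma~\ref{lem:Wm-formulas} once one spots the right pair of identities. The only substantive input beyond the lemma is the positivity of $I_m$ on $(0,\infty)$, which has already been recorded in Section~2, and the classical uniqueness theorem for the Bessel ODE.
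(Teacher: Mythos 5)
Your proof is correct and coincides with the paper's argument: both parts use the identities of Lemma~\ref{lem:Wm-formulas} together with the positivity of $I_m$ on $(0,\infty)$ to reduce a joint zero of the $W$'s to a double zero of a Bessel function, which is forbidden by the second-order ODE. The only cosmetic difference is that for part~\ref{thm-part-a} you invoke the derived identities~\ref{eq:w0+w1} and~\ref{eq:w0-w1} directly, whereas the paper cites~\ref{eq:wm+} and~\ref{eq:wm-}, from which those follow.
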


\section{A joint zero is algebraic}
In this section we show that the recursion given in Theorem~\ref{thm:recursion} combined with the fact that the four functions~$W_0, W_1, W_2, W_3$
do not have a joint positive zero (as follows from Theorem~\ref{thm:w0-w1-zero-forbidden}) implies that a joint zero of four distinct $W_m$'s
must be algebraic. We emphasize that this implication is independent of the specific nature of functions~$W_m$ (for example, it does not depend
on the non-trivial fact that the $W_m s$ are linearly independent - see Appendix).

\begin{proposition}
	\label{prop:algebraic}
Let $\Fl$ be a linear subspace of meromorphic functions in~$\Cb$. Let $(f_m)_{m=0}^{\infty}$ be any sequence in~$\Fl$ which satisfies the recursion relation given in Theorem~\ref{thm:recursion} and 
assume that $f_0, f_1, f_2$ and $f_3$ have no common positive zero.
Let $m_0, m_1, m_2, m_3$ be distinct  non-negative integers.
Let $x_0>0$ be such that 
 $f_{m_0}(x_0)=f_{m_1}(x_0)=f_{m_2}(x_0)=f_{m_3}(x_0)=0$.
 Then $x_0$ is algebraic.
\end{proposition}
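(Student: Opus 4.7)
\emph{Setup and reduction.} The plan is to work in the free $\Qb(z)$-module $V=\Qb(z)^4$ with standard basis $e_0,e_1,e_2,e_3$. Define an abstract sequence $(\Phi_n)_{n\ge 0}$ in $V$ by $\Phi_j=e_j$ for $j\in\{0,1,2,3\}$, and extend to all $n\ge 4$ by solving the recursion of Theorem~\ref{thm:recursion} for $\Phi_{m+4}$ (permissible, since the coefficient of $\Phi_{m+4}$ in that recursion is $1$). A straightforward induction produces $\Phi_n=(a_n,b_n,c_n,d_n)$ with $a_n,b_n,c_n,d_n\in\Qb(z)$. Since $\Fl$ is a $\Cb$-linear space and $(f_m)$ satisfies the very same recursion, the same induction yields
\begin{equation*}
  f_n = a_n(z)\,f_0 + b_n(z)\,f_1 + c_n(z)\,f_2 + d_n(z)\,f_3,\qquad n\ge 0.
\end{equation*}

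\emph{Extracting an algebraic equation.} Evaluating the above at $z=x_0$ for $n=m_0,m_1,m_2,m_3$ and using $f_{m_i}(x_0)=0$, one sees that the vector $v:=(f_0(x_0),f_1(x_0),f_2(x_0),f_3(x_0))^T$ lies in the kernel of the $4\times 4$ matrix $M(x_0)$ whose $i$-th row is $\Phi_{m_i}(x_0)$. The no-common-zero hypothesis forces $v\ne 0$, so $\det M(x_0)=0$. The rational function $\det M(z)$ has rational coefficients, and clearing its denominator produces a polynomial in $\Qb[z]$ vanishing at $x_0$; provided it is not the zero polynomial, i.e.\ $\det M\not\equiv 0$, this polynomial witnesses the algebraicity of $x_0$.

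\emph{The main obstacle: non-vanishing of the determinant.} The heart of the proof is thus to show $\det M(z)\not\equiv 0$, equivalently that any four distinct $\Phi_{m_i}$ are $\Qb(z)$-linearly independent in $V$; this is a purely abstract statement about the recursion. I would argue by induction on the largest index $m_3$. The base case $m_3\le 3$ is immediate, as the $\Phi_{m_i}$ are then distinct standard basis vectors. For the inductive step ($m_3\ge 4$), rewrite the last row via the recursion as a $\Qb(z)$-combination of $\Phi_{m_3-1},\Phi_{m_3-2},\Phi_{m_3-3},\Phi_{m_3-4}$ and expand the determinant multilinearly. Each of the four resulting terms either vanishes, when $m_3-k$ coincides with some $m_i$, or is a $4\times 4$ determinant of strictly smaller largest index, nonzero by the inductive hypothesis. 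To rule out global cancellation among the surviving terms, I would exploit the grading and non-cancellation features emphasised in the introduction: the coefficient of $\Phi_{m_3-4}$ is the nonzero constant $-\tfrac{m_3-1}{m_3-3}$, whereas the coefficients of $\Phi_{m_3-1}$ and $\Phi_{m_3-3}$ carry the singular factor $1/z^2$; comparing Laurent orders at $z=0$ should isolate a dominant summand. Organising this case analysis, according to how $m_0,m_1,m_2$ interact with the window $\{m_3-4,\dots,m_3-1\}$, is where I expect the principal technical difficulty to lie.
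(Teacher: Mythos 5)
Your overall strategy matches the paper's exactly: define an abstract sequence in $\Qb(z)^4$ driven by the same recursion, observe that $f_n=a_nf_0+b_nf_1+c_nf_2+d_nf_3$, evaluate at $x_0$ to obtain a singular matrix over $\Qb$, and reduce the proposition to the claim that any four distinct $\Phi_{m_i}$ are $\Qb(z)$-linearly independent. This reduction is sound and is precisely Lemma~\ref{lem:rec-ind} and its use in the paper. Even the induction you propose (on the largest index, with the base case being the standard basis vectors and the inductive step coming from expanding the last row of the determinant via the recursion) is the same as the paper's, since the paper's induction is on $j+k+l+m=m_3-3$.

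The genuine gap is in your treatment of cancellation. You write that ``comparing Laurent orders at $z=0$ should isolate a dominant summand,'' but this is not true in general: in several configurations two or even three of the surviving determinants after expansion carry the same leading Laurent order. For instance, in the paper's Case 5 ($m=1,\,l$ odd), Case 9 ($m=0,\,l\ge 3$ odd), and Case 11 ($m=0,\,l=1,\,k\ge 1$), the expansion produces multiple terms sharing the maximal power of $z^{-2}$, and the argument that they do not cancel requires knowing not only the degree but also the sign of each leading coefficient. The paper's Claim therefore keeps track of both pieces of data: the leading term of $(F_{m_0},F_{m_1},F_{m_2},F_{m_3})$ has degree $k+2\lfloor l/2\rfloor+m$ in $z^{-2}$ and sign $(-1)^m$ with a strictly positive constant $B_{jklm}$, and it is the positivity of the $B$'s together with the sign rule that lets like-degree terms reinforce rather than cancel. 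You correctly identify this organisation as the hard part, but the heuristic you offer (a single dominant Laurent order) would not suffice; one must also establish and propagate the sign/positivity structure through the induction, which is what the paper's fifteen-case verification accomplishes.
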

The heart of the proof of Proposition~\ref{prop:algebraic} is
a linear independence property implied by the recursion of Theorem~\ref{thm:recursion}.
\begin{lemma}
	\label{lem:rec-ind}
	Let~$V$ be a four dimensional linear space over the field of rational functions with rational coefficients~$\Qb(z)$.
	Let $(F_0, F_1, F_2, F_3)$ be a basis of~$V$, and define a sequence
	$(F_m)_{m=0}^{\infty}$ in~$V$ by the recursion of Theorem~\ref{thm:recursion}. Let $m_0, m_1, m_2, m_3$ be distinct non-negative integers. Then, the set of vectors $\{F_{m_0}, F_{m_1}, F_{m_2}, F_{m_3}\}$ is linearly independent.
\end{lemma}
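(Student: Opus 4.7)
The plan is to argue by induction on $M := \max_i m_i$. The base case $M \le 3$ is immediate: then $\{m_0, m_1, m_2, m_3\} = \{0,1,2,3\}$, so the four vectors form the given basis.

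For the inductive step with $M \ge 4$, I pass to the wedge product. Set
\[
D(n) := F_{m_0} \wedge F_{m_1} \wedge F_{m_2} \wedge F_n \in \wedge^4 V \cong \Qb(z).
\]
By multilinearity in the last factor, $D(n)$ depends $\Qb(z)$-linearly on the coordinates of $F_n$ in the basis, so $n \mapsto D(n)$ satisfies the same four-term recursion of Theorem~\ref{thm:recursion} as the sequence $F_m$ itself. Antisymmetry yields $D(m_0) = D(m_1) = D(m_2) = 0$, and the lemma reduces to showing $D(m_3) \ne 0$. Equivalently, it suffices to prove that no nonzero solution of the recursion can vanish at four distinct non-negative integer points.

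The essential non-cancellation input, the \emph{grading} property alluded to in the introduction, is that when rewritten in symmetric form
\[
F_{m+4} + \tfrac{2(m+2)}{m+1}F_{m+2} + \tfrac{m+3}{m+1}F_m \;=\; \tfrac{4(m+2)(m+3)}{z^2}\bigl(F_{m+1} - F_{m+3}\bigr),
\]
all five coefficients attached to $F_m, F_{m+1}, \ldots, F_{m+4}$ are nonzero in $\Qb(z)$. Consequently any four of the five consecutive values $D(m), \ldots, D(m+4)$ determine the fifth, and so four consecutive zeros of a solution force the whole solution to vanish by propagation in both directions along the recursion.

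It remains to deduce from four distinct integer zeros of $D$ the existence of four \emph{consecutive} zeros. When the four indices already fit inside a window of five consecutive integers, this is immediate from the previous paragraph. In the general case I would express, via repeated application of the recursion, every value $D(n)$ for $n$ in the integer interval $[m_0, m_3]$ as an explicit $\Qb(z)$-linear combination of a few free unknowns, and then use the vanishing $D(m_i) = 0$ to derive an overdetermined linear system in those unknowns. The main obstacle is verifying that this system has maximal rank, i.e.\ that the rational-function entries arising do not conspire to cancel. Here I would exploit the splitting of the recursion coefficients into $z$-independent pieces $-\tfrac{m+3}{m+1}$, $\tfrac{2(m+2)}{m+1}$ (attached to $F_m$ and $F_{m+2}$) and $1/z^2$-pieces $\pm\tfrac{4(m+2)(m+3)}{z^2}$ (attached to $F_{m+1}$ and $F_{m+3}$), together with the parity structure of Lemma~\ref{lem:Wm-formulas}, to preclude cancellation between the $z$-independent and $1/z^2$ parts of the system.
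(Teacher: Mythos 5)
Your reduction is correct as far as it goes: passing to $D(n) := F_{m_0}\wedge F_{m_1}\wedge F_{m_2}\wedge F_n$ shows that $n\mapsto D(n)$ is a scalar solution of the same recursion, and linear independence is equivalent to the assertion that no nonzero scalar solution of the recursion vanishes at four distinct non-negative integer indices. (Note that this is not really a \emph{reduction}: since the space of scalar solutions is canonically $V^*$, that assertion is logically equivalent to the Lemma for all choices of indices, so the wedge formulation is a reformulation rather than progress.) Your treatment of the consecutive case is also fine: since the coefficient of the lowest index $F_m$ in the recursion, $-\tfrac{m+3}{m+1}$, is nonzero, four consecutive zeros propagate forwards and backwards to kill the whole solution.

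The genuine gap is precisely the point you flag at the end. For four scattered indices, the entire content of the Lemma is that the resulting overdetermined linear system over $\Qb(z)$ has full rank, and you explicitly leave that unverified (``the main obstacle is verifying that this system has maximal rank \ldots I would exploit \ldots to preclude cancellation''). This is not a side issue one can wave past by parity considerations; it is the whole theorem. The paper proves it by parametrizing the gaps between the indices as $(j,k,l,m)$ and carrying out an induction on $j+k+l+m$ which tracks the exact leading power $z^{-2(k+2\lfloor l/2\rfloor+m)}$ of the alternating form $(F_{m_0},F_{m_1},F_{m_2},F_{m_3})$ and shows its leading coefficient $(-1)^m B_{jklm}$ has controlled sign; this requires a careful case analysis (roughly a dozen cases depending on how small $m$, $l$, $k$, $j$ are, and on the parity of $l$) because the leading contributions come from different terms of the recursion depending on the gap pattern. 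Nothing in your sketch addresses which term dominates or why the contributions cannot cancel. A further minor issue: you invoke ``the parity structure of Lemma~\ref{lem:Wm-formulas},'' but Lemma~\ref{lem:rec-ind} is a statement about an abstract sequence in an abstract $\Qb(z)$-vector space, so nothing specific to the Bessel cross-products $W_m$ is available; the non-cancellation must be extracted from the recursion coefficients alone, which is exactly what the paper's grading argument does.
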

The proof of Lemma~\ref{lem:rec-ind} is based on
nice grading and non-cancellation properties of the recursion in Theorem~\ref{thm:recursion}. We give its proof
 below the proof of Proposition~\ref{prop:algebraic}.
\begin{proof}[Proof of Proposition~\ref{prop:algebraic}]
	Consider a space~$V$ and a sequence~$(F_m)_{m=0}^{\infty}$ as in Lemma~\ref{lem:rec-ind}. According to Lemma~\ref{lem:rec-ind}
		we can uniquely express
		$$F_{m_j}=\sum_{j=0}^{3} A_{jk} F_j\ ,$$
		where $A=(A_{jk})\in M_4(\Qb(z))$ is an invertible matrix.
		Since the sequence~$(f_m)$ satisfies the same recursion we conclude
	 that (not necessarily uniquely)
	 \begin{equation}
	 \label{eq:f_m-expansion}
	  f_{m_j}=A_{j0} f_0+A_{j1} f_1 + A_{j2} f_2 + A_{j3} f_3\ .
	 \end{equation}
	 Taking a least common denominator~$D\in \Qb[z]$ 
	 for all~$A_{jk}$s we get
	 \begin{equation*}
	 \label{eq:f_m-poly-expansion}
	 D f_{m_j}=\tilde{A}_{j0} f_0 + \tilde{A}_{j1} f_1 +\tilde{A}_{j2} f_2
	 +\tilde{A}_{j3} f_3\ ,
	 \end{equation*}
	 where $D$ and~$\tilde{A}_{jk}$ are polynomials in~$\Qb[z]$.
	Evaluation of this identity at the point~$x_0$ results in
	$$D(x_0)
	\begin{pmatrix}
		f_{m_0}(x_0)\\
		f_{m_1}(x_0)\\
		f_{m_2}(x_0)\\
		f_{m_3}(x_0)
	\end{pmatrix}=\tilde{A}(x_0)\begin{pmatrix}
	f_{0}(x_0)\\
	f_{1}(x_0)\\
	f_{2}(x_0)\\
	f_{3}(x_0)
	\end{pmatrix}$$

	The left hand side is the zero vector by our assumption, while the vector $(f_0(x_0), f_1(x_0), f_2(x_0), f_3(x_0))\in \Cb^4$ is
	not zero by our assumption. We conclude that $\tilde{A}(x_0)\in M_4(\Qb)$ is
	non-invertible. Hence, $\Det (\tilde{A})(x_0)=\Det (\tilde{A}(x_0))=0$, and since $A\in M_4(\Qb(z))$ is invertible, $\Det(\tilde A)$ is a non-zero polynomial in~$\Qb[z]$ and we can conclude that $x_0$ is algebraic.
\end{proof}
\begin{proof}[Proof of Lemma~\ref{lem:rec-ind}]
Assume that $0\leq m_0<m_1<m_2<m_3$ and  define the parameters $(j, k, l, m)$
by 
\begin{equation*}
m_0=j, \quad m_1=1+j+k,\quad  m_2=2+j+k+l,\quad m_3=3+j+k+l+m.
\end{equation*}
Let us refine the statement in the Lemma.
Consider the unique anti-symmetric four-linear form defined on~$V$ for which
$(F_0, F_1, F_2, F_3):=1$.
We need to show that $(F_{m_0}, F_{m_1}, F_{m_2}, F_{m_3})\neq 0$.
Keeping track of the leading term in these determinant-like expressions
we prove
\begin{claim*}
	There exist constants $B_{jklm}>0$ such that
	\begin{multline*}
	(F_j, F_{1+j+k}, F_{2+j+k+l}, F_{3+j+k+l+m}) =\\ (-1)^m B_{jklm} z^{-2(k+2\lfloor l/2\rfloor+m)}+P_{jklm}(z^{-2})\ ,
	\end{multline*}
	where $P_{jklm}\in\Qb[z]$ is of degree smaller than $k+2\lfloor l/2 \rfloor+m-1$.
\end{claim*}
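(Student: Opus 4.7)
The plan is to prove the Claim by strong induction on $m_3 = 3+j+k+l+m$. The base case $m_3=3$ (so $(j,k,l,m)=(0,0,0,0)$) follows immediately from the normalization $(F_0,F_1,F_2,F_3):=1$, giving $B_{0000}=1$ and $P_{0000}=0$.

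For the inductive step with $m_3\geq 4$, I would apply Theorem~\ref{thm:recursion} to express
\begin{equation*}
F_{m_3} = -\tfrac{m_3-1}{m_3-3}F_{m_3-4}+\tfrac{4(m_3-2)(m_3-1)}{z^2}F_{m_3-3}-\tfrac{2(m_3-2)}{m_3-3}F_{m_3-2}-\tfrac{4(m_3-2)(m_3-1)}{z^2}F_{m_3-1},
\end{equation*}
substitute this into the last slot of the 4-form, and expand by multilinearity. This produces four summands $c_i(z)\cdot(F_{m_0},F_{m_1},F_{m_2},F_{m_3-i})$ for $i\in\{1,2,3,4\}$. A summand vanishes by antisymmetry precisely when $m_3-i\in\{m_0,m_1,m_2\}$; for each surviving summand I would reorder the indices into increasing order (recording the sign of the permutation), read off the reduced parameters $(j',k',l',m')$ (which automatically satisfy $m_3'<m_3$), and apply the inductive hypothesis. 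The key structural feature is that $c_1$ and $c_3$ carry a $z^{-2}$ factor, while $c_2$ and $c_4$ are constants in~$z$.

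The argument then splits on whether $m\geq 1$ or $m=0$. When $m\geq 1$, the $i=1$ summand survives with reduced parameters $(j,k,l,m-1)$; its $z^{-2}$ coefficient promotes the inductive leading degree $D-1$ to $D=k+2\lfloor l/2\rfloor+m$ and converts the sign $(-1)^{m-1}$ into $(-1)^m$. A direct check shows the other summands contribute at degree $\leq D-2$, except in the borderline sub-case $m=1$ with $l$ odd, where the $i=3$ summand also lands at degree $D$ with a matching sign; inductive positivity of $B_{j,k,l,0}$ and $B_{j,k,l-1,0}$ then rules out cancellation. When $m=0$ the $i=1$ summand vanishes since $m_3-1=m_2$, and a sub-analysis on the parity of~$l$ (and, when $l=0$, on whether $k$ or $j$ is positive) identifies the dominant summand(s). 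In several borderline configurations (e.g.\ $m=0$ with $l$ odd and $l\geq 3$) multiple summands contribute simultaneously at degree $D$; in each such configuration the contributions share a common sign, again by inductive positivity, producing $B_{jklm}>0$ as required. The floor $\lfloor l/2\rfloor$ in the degree formula is essentially bookkeeping: odd shifts in~$l$ are achieved through the ``no-boost'' coefficients $c_2,c_4$, whereas even shifts require the ``$z^{-2}$-boost'' coefficients $c_1,c_3$.

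The main obstacle is the combinatorial case analysis: pinning down which summand dominates in each pattern of $(j,k,l,m)$ and checking non-cancellation whenever several summands contribute at the same leading degree---positivity of the inductively obtained $B$'s is used exactly to rule out such cancellations. The gap $\deg P_{jklm}<D-1$ propagates automatically, since every inductive form already carries this gap and the recursion coefficients shift degrees by $0$ or by $2$, never by $1$.
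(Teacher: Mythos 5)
Your proposal is correct and follows essentially the same route as the paper: you induct on the largest index (equivalently on $j+k+l+m$), apply Theorem~\ref{thm:recursion} to the last slot of the $4$-form, expand by multilinearity and antisymmetry, and identify the dominant summand(s) case by case, using inductive positivity of the $B$'s to rule out cancellation when several summands hit the leading degree $D$. The paper simply writes out the full list of fifteen cases explicitly, while you describe the same case structure (the $m\geq 1$ versus $m=0$ split, the borderline $l$-odd configurations, and the $z^{-2}$-boost versus no-boost coefficients) at a higher level.
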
 
	The proof of the preceding claim is by induction on $j+k+l+m$. The base case  $(j, k, l , m)=(0, 0, 0, 0)$ is trivial. For the sake of shortly written expressions we introduce
	some notations to expressions appearing as coefficients in the recursion of
	Theorem~\ref{thm:recursion}.
	\begin{equation*}
	\begin{aligned}
	\alpha_{jklm}&:=4(1+j+k+l+m)(2+j+k+l+m),\\ \beta_{jklm}&:=\frac{2+j+k+l+m}{j+k+l+m},\\
	\gamma_{jklm}&:=\beta_{jklm}+1.
	\end{aligned}
	\end{equation*}
	We now unroll the determinant by applying the recursion given in Theorem~\ref{thm:recursion}.
	\begin{equation*}
	\begin{split}
	(F_j, F_{1+j+k},& F_{2+j+k+l}, F_{3+j+k+l+m}) = \\
	&(F_j, F_{1+j+k}, F_{2+j+k+l},     F_{1+j+k+l+m}+F_{3+j+k+l+m})+\\
	-&(F_j, F_{1+j+k}, F_{2+j+k+l}, F_{1+j+k+l+m}) = \\
	\alpha_{jklm}z^{-2}&(F_j , F_{1+j+k}, F_{2+j+k+l}, F_{j+k+l+m}-F_{2+j+k+l+m})+\\
	-\beta_{jklm}&(F_j , F_{1+j+k},  F_{2+j+k+l}, F_{-1+j+k+l+m}+F_{1+j+k+l+m})+\\
	-&(F_j, F_{1+j+k},  F_{2+j+k+l}, F_{1+j+k+l+m})\ .
	\end{split}
	\end{equation*}
	After a slight rearrangement we obtain
	\begin{equation}
	\label{eq:det}
	\begin{split}
	(F_j, F_{1+j+k},& F_{2+j+k+l}, F_{3+j+k+l+m})=\\
	\alpha_{jklm}z^{-2}&(F_j , F_{1+j+k}, F_{2+j+k+l}, F_{3+j+k+l+(m-3)})+\\
	-\alpha_{jklm}z^{-2}& (F_j , F_{1+j+k}, F_{2+j+k+l},
	F_{3+j+k+l+(m-1)})+\\
	-\beta_{jklm}&(F_{j}, F_{1+j+k}, F_{2+j+k+l}, F_{3+j+k+l+(m-4)})+\\
	-\gamma_{jklm}&(F_{j}, F_{1+j+k}, F_{2+j+k+l}, F_{3+j+k+l+(m-2)})\ .
	\end{split}
	\end{equation}
	We denote the expression obtained in~\eqref{eq:det} by~$X$.
	In order to apply the induction hypothesis, we distinguish several cases:
	\begin{enumerate}[label=\textbf{Case \arabic*:}]
		\item  $m\geq 4$.
		In this case one gets
		by the induction hypothesis that for some polynomial~$\tilde{P}_{jklm}$ (of controlled degree)
		\begin{equation*}
		\begin{split}
		X=-(-1)^m\alpha_{jklm}B_{jkl(m-3)}&z^{-2k-4\lfloor l/2\rfloor-2(m-3)-2}+\\
		(-1)^m\alpha_{jklm}B_{jkl(m-1)} &z^{-2k-4\lfloor l/2\rfloor-2(m-1)-2}+\\
		-(-1)^m\beta_{jklm}B_{jkl(m-4)} &z^{-2k-4\lfloor l/2\rfloor-2(m-4)}+\\
		(-1)^m\gamma_{jklm}B_{jkl(m-2)} &z^{-2k-4\lfloor l/2\rfloor-2(m-2)}+
		\tilde{P}_{jklm}(z^{-2})=\\
		(-1)^m \alpha_{jklm}B_{jkl(m-1)}&z^{-2k-4\lfloor l/2 \rfloor-2m}+P_{jklm}(z^{-2}) 
		\end{split}
		\end{equation*}
		where $P_{jklm}$ is a polynomial of degree smaller than $k+2\lfloor l/2\rfloor + m-2+1$.
		\vspace{1ex}
		\item $m=3$. In this case the anti-symmetry of the determinant is used to get
		\begin{equation*}
		\begin{split}
		X=\alpha_{jklm}z^{-2}&(F_j, F_{1+j+k}, F_{2+j+k+l}, F_{3+j+k+l+0})+\\
		-\alpha_{jklm}z^{-2}&(F_j, F_{1+j+k}, F_{2+j+k+l}, F_{3+j+k+l+2})+\\
		-\gamma_{jklm}&(F_j, F_{1+j+k}, F_{2+j+k+l}, F_{3+j+k+l+1})
		\end{split}
		\end{equation*}
		By induction we have
		\begin{equation*}
		\begin{split}
		X=\alpha_{jklm}B_{jkl0}&z^{-2k-4\lfloor l/2 \rfloor-2}+\\
		-(-1)^2\alpha_{jklm}B_{jkl2}&z^{-2k-4\lfloor l/2 \rfloor-4-2}+\\
		-(-1)^1\gamma_{jklm}B_{jkl1}&z^{-2k-4\lfloor l/2 \rfloor-2}+\tilde{P}_{jklm}(z^{-2})=\\
		(-1)^m \alpha_{jklm}B_{jkl2}&z^{-2k-4\lfloor l/2 \rfloor-2m}+P_{jkl3}(z^{-2})
		\end{split}
		\end{equation*}
		where $P_{jklm}$ is of degree smaller than~$k+2\lfloor l/2\rfloor+2$.
		\vspace{1ex}
		\item $m=2, l\geq 1$.
		\begin{equation*}
		\begin{split}
		X=-\alpha_{jklm}z^{-2}&(F_j, F_{1+j+k}, F_{2+j+k+l}, F_{3+j+k+l+1})+\\
		\beta_{jklm}&(F_j, F_{1+j+k}, F_{2+j+k+(l-1)}, F_{3+j+k+(l-1)+0})+\\
		-\gamma_{jklm}&(F_j, F_{1+j+k}, F_{2+j+k+l}, F_{3+j+k+l+0})\ .
		\end{split}
		\end{equation*}
		By induction,
		\begin{equation*}
		\begin{split}
		X=-(-1)\alpha_{jklm}B_{jkl1}&z^{-2k-4\lfloor l/2 \rfloor-2-2}+\\
		\beta_{jklm}B_{jk(l-1)0}&z^{-2k-4\lfloor (l-1)/2 \rfloor}+\\
		-\gamma_{jklm}B_{jkl0}&z^{-2k-4\lfloor l/2 \rfloor}+\tilde{P}_{jklm}(z^{-2})=\\
		(-1)^m \alpha_{jklm}B_{jkl1}&z^{-2k-4\lfloor l/2 \rfloor-2m}+{P_{jkl2}(z^{-2})}
		\end{split}
		\end{equation*}
		where $P_{jkl2}$ is of degree smaller than~$k+2\lfloor l/2\rfloor+1$.
		\vspace{1ex}
		\item $m=2, l=0$.
		\begin{equation*}
		\begin{split}
		X=-\alpha_{jklm}z^{-2}&(F_j, F_{1+j+k}, F_{2+j+k+0}, F_{3+j+k+0+1})+\\
		-\gamma_{jklm}&(F_j, F_{1+j+k}, F_{2+j+k+0}, F_{3+j+k+0+0})
		\end{split}
		\end{equation*}
		where by induction
		\begin{equation*}
		\begin{split}
		X=\alpha_{jklm}B_{jk01}&z^{-2k-2-2}+\\
		-\gamma_{jklm}B_{jk00}&z^{-2k}+\tilde{P}_{jklm}(z^{-2})=\\
		(-1)^m \alpha_{jklm}B_{jk01} &z^{-2k-4\lfloor l/2 \rfloor -2m} +P_{jk02}(z^{-2})
		\end{split}
		\end{equation*}
		and $P_{jk02}$ is of degree smaller than~$k+1$.
		\vspace{1ex}
		\item $m=1, l\geq 2$.
		\begin{equation*}
		\begin{split}
		X=-\alpha_{jklm}z^{-2}&(F_j, F_{1+j+k}, F_{2+j+k+(l-1)}, F_{3+j+k+(l-1)+0})+\\
		-\alpha_{jklm}z^{-2}&(F_j, F_{1+j+k}, F_{2+j+k+l}, F_{3+j+k+l+0})+\\
		\beta_{jklm}&(F_j, F_{1+j+k}, F_{2+j+k+(l-2)}, F_{3+j+k+(l-2)+1})
		\end{split}
		\end{equation*}
		
		Hence, by hypothesis
		\begin{equation*}
		\begin{split}
		X=-\alpha_{jklm}B_{jk(l-1)0}&z^{-2k-4\lfloor (l-1)/2 \rfloor-2}+\\
		-\alpha_{jklm}B_{jkl0}&z^{-2k-4\lfloor l/2 \rfloor-2}+\\-\beta_{jklm}B_{jk(l-2)1}&z^{-2k-4\lfloor (l-2)/2 \rfloor -2}+\tilde{P}_{jklm}(z^{-2})\ .
		\end{split}
		\end{equation*}
		Now it becomes a bit trickier to tell which the leading term is.
		If~$l$ is even then it is the second one,
		so we take  $B_{jkl1}=\alpha_{jklm}B_{jkl0}$.
		If~$l$ is odd then the first two terms contribute to the leading term and are of the same  sign, so we take $B_{jkl1}=\alpha_{jklm}(B_{jk(l-1)0}+B_{jkl0})$.
		In any case, we obtain 
		$$X=(-1)^m B_{jklm}z^{-2k-4\lfloor l/2 \rfloor-2m}+P_{jkl1}(z^{-2})\ ,$$ where $P_{jklm}$ is of degree smaller than~$k+2\lfloor l/2 \rfloor$.
		\vspace{1ex}
		\item $m=1, l=1$.
		\begin{equation*}
		\begin{split}
		X=-\alpha_{jklm}z^{-2}&(F_j, F_{1+j+k}, F_{2+j+k+0}, F_{3+j+k+0+0})+\\
		-\alpha_{jklm}z^{-2}&(F_j, F_{1+j+k}, F_{2+j+k+1},F_{3+j+k+1+0})\ .
		\end{split}
		\end{equation*}
		The induction gives
		\begin{equation*}
		\begin{split}
		X=-\alpha_{jklm}B_{jk00} & z^{-2k-2}+\\
		-\alpha_{jklm}B_{jk10}&z^{-2k-2}+P_{jklm}(z^{-2})=\\
		(-1)^m \alpha_{jklm}(B_{jk00}+B_{jk10})&z^{-2k-4\lfloor l/2 \rfloor -2m}+P_{jk11}(z^{-2})	
		\end{split}
		\end{equation*}  
		where $P_{jk11}$ is of degree smaller than~$k$.
		\vspace{1ex}
		\item $m=1, l=0, k\geq 1$.
		\begin{equation*}
		\begin{split}
		X=-\alpha_{jklm}z^{-2}&(F_j, F_{1+j+k}, F_{2+j+k+0}, F_{3+j+k+0+0})+\\
		-\beta_{jklm}&(F_j, F_{1+j+(k-1)}, F_{2+j+(k-1)+0}, F_{3+j+(k-1)+0+0})
		\end{split}
		\end{equation*}
		leading to
		\begin{equation*}
		\begin{split}
		X=-\alpha_{jklm}B_{jk00}&z^{-2k-2}+\\
		-\beta_{jklm}&z^{-2(k-1)}+\tilde{P}_{jklm}(z^{-2})=\\
		(-1)^m \alpha_{jklm}B_{jk00}&z^{-2k-4\lfloor l/2 \rfloor-2m}+P_{jk10}(z^{-2})
		\end{split}
		\end{equation*}
		where $P_{jk10}$ is of degree smaller than~$k$.
		\vspace{1ex}
		\item $m=1, l=0, k=0$.
		\begin{equation*}
		X=-\alpha_{jklm} z^{-2} (F_j, F_{1+j+0}, F_{2+j+0+0}, F_{3+j+0+0+0})
		\end{equation*}
		This simple expression gives by our hypothesis
		\begin{equation*}
		\begin{split}
		X= -\alpha_{jklm}B_{j000}&z^{-2}=\\
		(-1)^m \alpha_{jklm}B_{j000}&z^{-2k-4\lfloor l/2 \rfloor-2m}\ .
		\end{split}
		\end{equation*}
		\item $m=0, l\geq 3$.
		\begin{equation*}
		\begin{split}
		X=-\alpha_{jklm}z^{-2}&(F_j, F_{1+j+k}, F_{2+j+k+(l-2)}, F_{3+j+k+(l-2)+1})+\\
		\beta_{jklm}&(F_j, F_{1+j+k}, F_{2+j+k+(l-3)}, F_{3+j+k+(l-3)+2})+\\
		\gamma_{jklm} &(F_j, F_{1+j+k}, F_{2+j+k+(l-1)}, F_{3+j+k+(l-1)+0})\ .
		\end{split}
		\end{equation*}
		We are led to the tricky expression
		\begin{equation*}
		\begin{split}
		X=\alpha_{jklm}B_{jk(l-2)1}&z^{-2k-4\lfloor (l-2)/2 \rfloor-2-2}+\\
		\beta_{jklm}B_{jk(l-3)2}&z^{-2k-4\lfloor (l-3)/2 \rfloor-4}+\\
		\gamma_{jklm}B_{jk(l-1)0}&z^{-2k-4\lfloor (l-1)/2 \rfloor}+\tilde{P}_{jkl0}(z^{-2})\ .
		\end{split}
		\end{equation*}
		If~$l$ is even then  the leading term is the first one $B_{jkl0}z^{-2k-4\lfloor l/2 \rfloor}$ with
		$B_{jkl0}= \alpha_{jkl0}B_{jk(l-2)1}$. If~$l$ is odd, then the three first terms
		are of the same degree $-2k-4\lfloor l/2 \rfloor$.
		So, we let $B_{jkl0}=\alpha_{jkl0}B_{jk(l-2)1}+\beta_{jkl0}B_{jk(l-3)2}+\gamma_{jkl0}B_{jk(l-1)0}.$ 
		In any case we obtain 
		$$X=(-1)^m B_{jklm}z^{-2k-4\lfloor l/2 \rfloor -2m}+P_{jkl0}(z^{-2})$$
		where $P_{jkl0}$ is of degree smaller than~$k+2\lfloor l/2\rfloor-1$.
		\vspace{1ex}
		\item $m=0, l=2$.
		\begin{equation*}
		\begin{split}
		X=-\alpha_{jklm}z^{-2}(F_j, F_{1+j+k}, F_{2+j+k+0}, F_{3+j+k+0+1})+\\
		\gamma_{jklm}(F_j, F_{1+j+k}, F_{2+j+k+1}, F_{3+j+k+1+0})
		\end{split}
		\end{equation*}
		\begin{equation*}
		\begin{split}
		X=\alpha_{jklm}B_{jk01}&z^{-2k-2-2}+\\
		\gamma_{jklm}B_{jk10}&z^{-2k}+
		\tilde{P}_{jklm}(z^{-2})=\\
		(-1)^m \alpha_{jklm}B_{jk01}& z^{-2k-4\lfloor l/2 \rfloor -2m}+P_{jk20}(z^{-2})
		\end{split}
		\end{equation*}
		where $P_{jk20}$ is of degree smaller than~$k+1$.
		\vspace{1ex}
		\item $m=0, l=1, k\geq 1$.
		\begin{equation*}
		\begin{split}
		X=-\beta_{jklm}&(F_j, F_{1+j+(k-1)}, F_{2+j+(k-1)+0}, F_{3+j+(k-1)+0+1})+\\
		\gamma_{jklm}&(F_j, F_{1+j+k}, F_{2+j+k+0}, F_{3+j+k+0+0})\ .
		\end{split}
		\end{equation*}
		The last expression gives
		\begin{equation*}
		\begin{split}
		X=\beta_{jklm}B_{j(k-1)01}&z^{-2(k-1)-2}+
		\\ \gamma_{jklm}B_{jk00}&z^{-2k}+P_{jklm}(z^{-2})=\\
		(-1)^m (\beta_{jklm}B_{j(k-1)01}+\gamma_{jklm}B_{jk00})&z^{-2k-4\lfloor l/2 \rfloor -2m} +P_{jk10}(z^{-2})
		\end{split}
		\end{equation*}
		with $P_{jk10}$ of degree smaller than~$k-1$.
		\vspace{1ex}
		\item $m=0, l=1, k=0$.
		\begin{equation*}
		X=\gamma_{jklm}(F_j, F_{1+j+0}, F_{2+j+0+0}, F_{3+j+0+0+0}).
		\end{equation*}
		This is simply a positive constant (by induction)
		\begin{equation*}
		\begin{split}
		X=\gamma_{jklm}B_{j000}=(-1)^m \gamma_{jklm}B_{j000}z^{-2k-4\lfloor l/2 \rfloor -2m}\ .
		\end{split}
		\end{equation*}
		\item $m=0, l=0, k\geq 2$.
		\begin{equation*}
		\begin{split}
		X=\alpha_{jklm}z^{-2}&(F_j, F_{1+j+(k-1)}, F_{2+j+(k-1)+0}, F_{3+j+(k-1)+0+0})+\\
		-\beta_{jklm}&(F_j, F_{1+j+(k-2)}, F_{2+j+(k-2)+1}, F_{3+j+(k-2)+1+0})\ .
		\end{split}
		\end{equation*}
		Hence,
		\begin{equation*}
		\begin{split}
		X=\alpha_{jklm}B_{j(k-1)00}&z^{-2(k-1)-2}+\\
		-\beta_{jklm}B_{j(k-2)10}&z^{-2(k-2)} +\tilde{P}_{jklm}(z^{-2}) =\\
		(-1)^m \alpha_{jklm}B_{j(k-1)00} &z^{-2k-4\lfloor l/2 \rfloor -2m}+P_{jk00}(z^{-2})
		\end{split}
		\end{equation*}
		where $P_{jk00}$ is of degree smaller than~$k-1$.
		\vspace{1ex}
		\item $m=0, l=0, k=1$.
		\begin{equation*}
		X=\alpha_{jklm}z^{-2}(F_j, F_{1+j+0}, F_{2+j+0+0}, F_{3+j+0+0+0})
		\end{equation*}
		Thus,
		\begin{equation*}
		X= \alpha_{jklm}B_{j000}z^{-2} =
		(-1)^m \alpha_{jklm}B_{j000}z^{-2k-4\lfloor l/2 \rfloor -2m}\ .
		\end{equation*}
		\item $m=0, l=0, k=0, j\geq 1$.
		\begin{equation*}
		X=\beta_{jklm}(F_{j-1}, F_{1+(j-1)+0}, F_{2+(j-1)+0+0}, F_{3+(j-1)+0+0+0})\ .
		\end{equation*}
		By induction this is  a constant
		$$X=\beta_{jklm}B_{(j-1)000}=(-1)^m \beta_{jklm}B_{(j-1)000}z^{-2k-4\lfloor l/2 \rfloor -2m}\ .$$
	\end{enumerate}
\end{proof}

\section{Some elements from Siegel-Shidlovskii Theory - a zero is transcendental}
We recall the notion of a Siegel $E$-function.
Let~$E$ be a power series.
$$E(z)=\sum_{k=0}^{\infty} a_k \frac{z^k}{k!}\ .$$
\begin{definition}[\cite{sieg-1949}*{Ch.\ II.1}]
	$E$~is called an $E$-function if the following two conditions hold
	\begin{enumerate}[label=\textup{(\alph*)}]
		\item $a_k\in\Qb$ for all~$k$.
		\item If $a_k=p_k/q_k$, where $p_k, q_k\in\Zb$ are coprime, then 
		$a_k=o(k^{\eps k})$, and $q_k=o(k^{\eps k})$ as $k\to\infty$ for all $\eps>0$.
	\end{enumerate}
\end{definition}
We remark that any $E$-function is entire and~$E$ functions constitute a ring.
The examples we are interested in are the functions $J_m, J_m', I_m, I_m'$.
It is readily verified that these are all $E$-functions.

Siegel-Shidlovskii theory is concerned with transcendental properties of values of $E$-functions which satisfy a linear ODE system. 
The following theorem is one of the corner stones of the theory. It was proved for second order ODEs in~\cite{sieg-1929} and~\cite{sieg-1949},
and then it was simplified and extended to linear systems by Shidlovskii.
\begin{theorem}[\citelist{\cite{shid1959}\cite{shid-book}*{Ch.\ 3\S13, Second Fundamental Theorem}}]
	\label{thm:sieg-shid}
	Let $E_1, \ldots, E_k$ be algebraically independent $E$-functions over the field of rational functions $\Cb(z)$.
	Let $E=(E_1, \ldots, E_k)$ satisfy a linear ODE system of the form
	\begin{equation}
	\label{eq:ode}
	E'=A E\ ,
	\end{equation}
	where $A\in M_k(\Cb(z))$.
	Let $\alpha$ be algebraic and distinct from the poles of $A_{ij}$. 
	Then, the set of numbers $\{E_j(\alpha)\}_{j=1}^k$ is algebraically independent over~$\Qb$. 
\end{theorem}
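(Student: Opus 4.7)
The plan is to reproduce the classical Siegel--Shidlovskii scheme, which combines a Padé-type auxiliary construction via Siegel's lemma with a non-degeneracy argument (Shidlovskii's lemma) and a fundamental inequality for algebraic numbers. The argument has four logically distinct stages.

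First I would reduce the algebraic-independence claim to a linear-independence claim. Since the ring of $E$-functions is closed under products and the system $E'=AE$ extends by the Leibniz rule to the vector of monomials $E_1^{n_1}\cdots E_k^{n_k}$ of total degree at most $N$, one obtains a new vector $\tilde E$ of length $\ell=\binom{N+k}{k}$ of $E$-functions satisfying $\tilde E'=\tilde A\tilde E$ with $\tilde A\in M_\ell(\Cb(z))$. Algebraic independence of $E_1,\ldots,E_k$ over $\Cb(z)$ is equivalent to $\Cb(z)$-linear independence of these monomials. A polynomial $\Qb$-relation of degree $\leq N$ among the values $E_j(\alpha)$ becomes a $\Qb$-linear relation among the components of $\tilde E(\alpha)$. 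Hence it suffices to prove the following: if $F_1,\ldots,F_\ell$ are $E$-functions linearly independent over $\Cb(z)$, satisfying $F'=BF$ with $B\in M_\ell(\Cb(z))$, and $\alpha$ is algebraic and not a pole of $B$, then $F_1(\alpha),\ldots,F_\ell(\alpha)$ are $\Qb$-linearly independent.

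Second, for a large integer $n$, I would build a Padé-type approximating form. By Siegel's lemma applied to the linear system imposing vanishing conditions, one produces polynomials $P_1,\ldots,P_\ell\in\Zb[z]$ of degree at most $n$, not all zero, with heights bounded by $n^{cn}$, such that
\begin{equation*}
R(z)\;:=\;\sum_{j=1}^{\ell} P_j(z)F_j(z)
\end{equation*}
vanishes at $z=0$ to order at least $(\ell-1)n+\lfloor n/2\rfloor$. The arithmetic input is precisely the $E$-function condition, which guarantees that the corresponding integer linear system has coefficients of size $n^{O(n)}$. Iterating $F'=BF$ shows that every derivative $R^{(i)}$ again lies in the $\Cb(z)$-span of $F_1,\ldots,F_\ell$, producing rational functions $Q_{ij}$ with $R^{(i)}=\sum_j Q_{ij}F_j$, while $R^{(i)}$ continues to vanish to high order at $0$ and therefore is extremely small at $\alpha$.

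Third, the heart of the matter is Shidlovskii's lemma: for a suitable $M=O(n)$, there exist indices $0\leq i_1<\cdots<i_\ell\leq M$ such that the $\ell\times\ell$ matrix $(Q_{i_s,j})$ has non-zero determinant in $\Cb(z)$. Granting this, one evaluates at $\alpha$: the numbers $R^{(i_s)}(\alpha)$ are of size at most $\me^{-c'n\log n}$ by the order of vanishing at $0$ and the Cauchy integral, while the invertibility of $(Q_{i_s,j}(\alpha))$ combined with a hypothetical relation $\sum c_j F_j(\alpha)=0$ forces at least one $R^{(i_s)}(\alpha)$ to be a non-zero algebraic number of house at most $n^{Cn}$. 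The Liouville-type fundamental inequality for algebraic numbers contradicts the upper bound once $n$ is large enough, completing the proof.

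The main obstacle is Shidlovskii's lemma in the third stage. Siegel's original 1929/1949 argument required an ad hoc \emph{normality} condition on the ODE system, which is hard to verify in practice. Shidlovskii's innovation was to derive the non-vanishing of the relevant determinant from the $\Cb(z)$-linear independence of $F_1,\ldots,F_\ell$ alone, via a careful degree-and-rank analysis of the family $R,R',R'',\ldots$; this combinatorial step constitutes the technical core of \cite{shid-book}*{Ch.\ 3} and is precisely what makes the theorem applicable to general systems such as the one generated by $J_m,J_m',I_m,I_m'$. Once Shidlovskii's lemma is granted, the remaining size estimates follow the classical pattern Siegel established for the exponential function.
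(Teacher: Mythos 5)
The paper does not prove this theorem at all: it is quoted verbatim from the literature, with the proof delegated to \cite{shid1959} and \cite{shid-book}*{Ch.\ 3\S13}. So there is no in-paper argument to compare against; what you have written is an outline of the classical Siegel--Shidlovskii proof, and as an outline it is accurate. The four stages you describe --- reduction of algebraic independence of values to $\Qb$-linear independence of values of the monomials $E_1^{n_1}\cdots E_k^{n_k}$ (which again form an $E$-function solution of a linear system), the Pad\'e-type auxiliary form via Siegel's lemma with height bounds coming from the $E$-function condition, Shidlovskii's determinant lemma replacing Siegel's normality hypothesis, and the final confrontation between the analytic upper bound $\me^{-c'n\log n}$ and the Liouville-type lower bound --- are exactly the architecture of the Second Fundamental Theorem.

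That said, the proposal is a roadmap rather than a proof. Each pillar is invoked and ``granted'': Siegel's lemma with the arithmetic estimates, the fundamental inequality for the house of a nonzero algebraic number, and above all Shidlovskii's lemma, which you correctly identify as the technical core and which by itself occupies most of Chapter 3 of \cite{shid-book}. Two smaller points also need repair if this were to be made precise. First, Shidlovskii's lemma produces a minor $\det(Q_{i_s,j})$ that is nonzero as an element of $\Cb(z)$; to conclude you need it nonzero \emph{at} $\alpha$, which requires a degree count on the $Q_{ij}$ (their degrees grow only linearly in $i$) together with the freedom to choose the rows $i_1<\cdots<i_\ell$ from a window of length $O(n)$, so that the order of vanishing of the determinant at $\alpha$ cannot absorb all admissible choices. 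Second, the hypothesis $\alpha\neq 0$ is essential (at $\alpha=0$ every $E$-function takes a rational value, so the conclusion fails trivially, and your size estimates degenerate); the paper's statement omits it as well, harmlessly for its application since there $\alpha=x_0>0$, but your argument should record it.
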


The assumptions in Theorem~\ref{thm:sieg-shid} are verified in the 
case relevant to this paper by an earlier theorem of Siegel. 
\begin{theorem}[\cite{sieg-1929}, see also~\cite{shid-book}*{Ch.\ 9, \S5, Lemma 6}]
	\label{thm:siegel}
	The four $E$-functions
	$J_m, J_m', I_m, I_m'$ are algebraically independent over $\Cb(z)$.
\end{theorem}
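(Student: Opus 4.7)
The plan is to combine two classical independence results. First, observe that the column vector $E = (J_m, J_m', I_m, I_m')^T$ satisfies a linear ODE system $E' = AE$ with $A \in M_4(\Cb(z))$: Bessel's equation expresses $J_m''$ as a $\Cb(z)$-combination of $J_m, J_m'$, and similarly for $I_m''$. So the desired independence is a purely algebraic question about a vector of $E$-functions that is closed under differentiation modulo $\Cb(z)$.

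First I would handle each half separately: establish that $J_m, J_m'$ are algebraically independent over $\Cb(z)$ (Siegel's original statement), and likewise for $I_m, I_m'$. The standard argument posits a polynomial relation $P(z, u, v) \equiv 0$ of minimal bidegree, differentiates it, and substitutes $u''$ from the Bessel ODE to produce a relation of strictly smaller degree. The key non-trivial input is that $J_m$ (resp.\ $I_m$) is itself transcendental over $\Cb(z)$; this follows from the oscillatory asymptotics of $J_m$ on $\Rb_{>0}$, which produce infinitely many simple real zeros with a precise asymptotic density incompatible with any algebraic function on a Riemann surface.

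To glue the two halves, exploit $I_m(z) = (-i)^m J_m(iz)$ and $I_m'(z) = (-i)^{m-1} J_m'(iz)$, which convert a hypothetical relation $P(z, J_m, J_m', I_m, I_m') \equiv 0$ into $Q(z, J_m(z), J_m'(z), J_m(iz), J_m'(iz)) \equiv 0$ over $\Cb(z)$. Now contrast asymptotics on the positive real axis: $J_m(x), J_m'(x) = O(x^{-1/2})$, whereas $J_m(ix), J_m'(ix)$ grow like $e^x/\sqrt{x}$. Sort $Q$ by total degree in the pair $(J_m(iz), J_m'(iz))$; the highest-degree stratum dominates all lower strata exponentially, so its coefficient --- a polynomial in $J_m, J_m'$ with coefficients in $\Cb(z)$ --- must vanish identically as $x \to \infty$. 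By the first step this coefficient is the trivial polynomial, and iterating collapses $Q$ to zero, contradicting the minimality of the chosen relation.

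The main obstacle I expect is this last degree reduction: $I_m$ and $I_m'$ share the leading asymptotic $e^x/\sqrt{2\pi x}$, so a crude comparison of exponential orders only controls the \emph{total} degree in $(I_m, I_m')$ and not the individual monomials $I_m^a (I_m')^b$ with fixed $a+b$. To peel these off one at a time one should exploit the next-order term in $I_m'/I_m = 1 - \tfrac{1}{2x} + O(x^{-2})$, or equivalently the non-trivial Wronskian-type relations among $I_m, I_m'$ that hold on the real axis, in order to separate contributions of equal exponential growth.
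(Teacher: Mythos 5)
The paper does not prove Theorem~\ref{thm:siegel}; it is quoted as a known result of Siegel with a precise reference to Shidlovskii's book. Siegel's 1929 argument (and Shidlovskii's later systematization) is a long, delicate piece of differential algebra analyzing which algebraic relations are compatible with the $4\times4$ first-order system satisfied by $(J_m,J_m',I_m,I_m')$; it is not an asymptotic-comparison argument. So the right question is not whether your route matches the paper's (there is nothing to match), but whether it is sound.

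There is a genuine gap in the gluing step, precisely where you flag trouble. After stratifying $Q$ by total degree $N$ in $(I_m,I_m')$, the exponential comparison gives that the top stratum $Q_D=\sum_{a+b=D}c_{ab}(z,J_m,J_m')\,I_m^a(I_m')^b$ is $O\bigl(e^{(D-1)x}\cdot\text{poly}\bigr)$, i.e.\ after dividing by $I_m(x)^D$ one learns that $\sum_{a+b=D}c_{ab}(x)\bigl(I_m'(x)/I_m(x)\bigr)^b$ is small. You then want to conclude that each coefficient polynomial $c_{ab}(z,u,v)$ is \emph{identically} zero. But ``evaluates to something small on $\Rb_+$'' does not imply ``is the zero polynomial'': a nonzero polynomial in $z,J_m,J_m'$ can tend to zero on the real axis because the oscillatory leading terms of $J_m,J_m'$ conspire. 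A concrete instance is
\begin{equation*}
z\bigl(J_m(z)^2+J_m'(z)^2\bigr)-\tfrac{2}{\pi}\ ,
\end{equation*}
which is $O(1/x)$ as $x\to+\infty$ by the standard asymptotics, yet is a nonzero element of $\Qb[z,J_m,J_m']$ by the very independence you establish in step one. Thus the peeling does not close: what you would actually need is a quantitative lower bound of the type ``a nonzero polynomial in $(z,J_m,J_m',I_m'/I_m)$ cannot decay exponentially on $\Rb_+$,'' which is essentially a transcendence statement in its own right and is not supplied by passing to the next term of the $I_m'/I_m$ expansion. A secondary issue is in step one: differentiating a minimal relation $P(z,J_m,J_m')\equiv0$ and substituting Bessel's equation for $J_m''$ preserves total degree (and can raise the degree in $J_m'$, via the term $P_u\cdot J_m'$), so no strict degree drop occurs; the algebraic independence of $J_m,J_m'$ is itself a nontrivial theorem, not a two-line induction.
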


As a corollary we have
\begin{corollary}
	\label{cor:sieg-shid}
	Let $x_0>0$. If $W_m(x_0)=0$ then $x_0$ is transcendental.
\end{corollary}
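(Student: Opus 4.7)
The plan is to obtain a contradiction from the assumption that $x_0 > 0$ is algebraic with $W_m(x_0) = 0$, by applying the Siegel--Shidlovskii Theorem (Theorem~\ref{thm:sieg-shid}) to the tuple of $E$-functions $(J_m, J_m', I_m, I_m')$ and comparing with the algebraic independence result of Theorem~\ref{thm:siegel}.

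First, I would assemble the linear ODE system governing these four functions. Setting $E := (J_m, J_m', I_m, I_m')^T$, the defining second-order ODEs from Section~2 yield $J_m'' = -\frac{1}{z} J_m' + \bigl(\frac{m^2}{z^2} - 1\bigr) J_m$ and $I_m'' = -\frac{1}{z} I_m' + \bigl(\frac{m^2}{z^2} + 1\bigr) I_m$, so $E$ satisfies a system $E' = A E$ with $A \in M_4(\Qb(z))$ whose entries have their only poles at $z = 0$. Since $x_0 > 0$ is assumed algebraic, in particular $x_0 \neq 0$, hence $x_0$ is distinct from every pole of $A$.

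Next I would invoke Theorem~\ref{thm:siegel}, which states that the entries of $E$ are algebraically independent over $\Cb(z)$. Combined with the remark (just before Theorem~\ref{thm:sieg-shid}) that $J_m, J_m', I_m, I_m'$ are $E$-functions, all hypotheses of Theorem~\ref{thm:sieg-shid} are in place. Applying it at $\alpha = x_0$ gives algebraic independence of the four complex numbers $J_m(x_0), J_m'(x_0), I_m(x_0), I_m'(x_0)$ over $\Qb$.

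The final step is to derive the contradiction. By the very definition \eqref{eqn:Wm-def},
\begin{equation*}
W_m(x_0) = I_m'(x_0) J_m(x_0) - I_m(x_0) J_m'(x_0) = 0,
\end{equation*}
which is a vanishing of the nonzero polynomial $P(X_1,X_2,X_3,X_4) = X_4 X_1 - X_3 X_2 \in \Qb[X_1,X_2,X_3,X_4]$ at $(J_m(x_0), J_m'(x_0), I_m(x_0), I_m'(x_0))$. This contradicts the algebraic independence just established, so $x_0$ cannot be algebraic. There is no real obstacle here: the proof is essentially a packaging of the deep inputs (Siegel's algebraic independence of the four Bessel functions and the Siegel--Shidlovskii theorem), with the only verification being that the system $E' = AE$ has rational coefficients and that $x_0 \neq 0$.
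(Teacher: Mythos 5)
Your proof is correct and follows exactly the paper's argument: write $E=(J_m,J_m',I_m,I_m')$ as a solution of the first-order rational system, combine Theorem~\ref{thm:siegel} with Theorem~\ref{thm:sieg-shid} to get algebraic independence of the four values at any positive algebraic point, and observe that $W_m$ is a nonzero polynomial in those values. The only difference is that you spell out the pole check and the explicit polynomial, which the paper leaves implicit.
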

\begin{proof}[Proof of Corollary~\ref{cor:sieg-shid}]
	The vector of functions $E=(J_m, J_m', I_m, I_m')$ satisfies an ODE
	of the form~(\ref{eq:ode})  with 
	$$	A(z)=\begin{pmatrix}
	0 & 1 &0 &0 \\
	-1+\frac{m^2}{z^2} &-\frac{1}{z} &0&0\\
	0&0&0&1\\
	0&0&1+\frac{m^2}{z^2}&-\frac{1}{z}
	\end{pmatrix}$$
	
	Let $\alpha$ be a positive algebraic number.
	By Theorems~\ref{thm:siegel} and~\ref{thm:sieg-shid} the four values
	$J_m(\alpha), J_m'(\alpha), I_m(\alpha), I_m'(\alpha)$ are 
	algebraically independent. In particular, $W_m(\alpha)$ as a polynomial in these numbers is not~$0$. 
\end{proof}

\section{Proof of the main Theorem~\ref{thm:wm-zeros}}
The main theorem now follows easily.

\begin{proof}[Proof of Theorem~\ref{thm:wm-zeros}]
	Let $x_0>0$ be a common zero of $W_{m_0}, W_{m_1}, W_{m_2}$ and~$W_{m_3}$.
	The functions~$W_0, W_1, W_2$ and $W_3$ have no common zero by Theorem~\ref{thm:w0-w1-zero-forbidden}. Hence, we can apply
	proposition~\ref{prop:algebraic} to conclude that the positive number $x_0$ must be algebraic. On the other hand,
	by Corollary~\ref{cor:sieg-shid} it must be transcendental.
	This is a contradiction.
\end{proof}

\begin{remark}
	The full power of Theorem~\ref{thm:w0-w1-zero-forbidden}
	was not used in the proof of Theorem~\ref{thm:wm-zeros}. The weaker statement that $W_0, W_1, W_2$ and~$W_3$ have no common zero follows also by computing a fourth order ODE for~$W_0$
	and showing that $(W_0, W_1, W_2, W_3)$ is obtained
	from $(W_0, W_0', W_0'', W_0''')$ by an invertible transformation.
	We leave the details to the reader.
\end{remark}
\section{Appendix-Shortest recursion possible}
We explain how our arguments for the proof of Theorem~\ref{thm:wm-zeros}
also show that any four distinct~$W_m$'s are algebraically independent over the field of rational functions~$\Cb(z)$. In particular, it follows
that the linear recursion in Theorem~\ref{thm:recursion} cannot be shortened while keeping rational coefficients.

\begin{claim}[base case]
	\label{clm:0123-ind}
	The four functions
	$W_0, W_1, W_2, W_3$ are algebraically independent over the field~$\Cb(z)$.
\end{claim}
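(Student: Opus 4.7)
The plan is to reduce the algebraic independence of $W_0, W_1, W_2, W_3$ over $\Cb(z)$ to Siegel's Theorem~\ref{thm:siegel}, which asserts that the four $E$-functions $J_0, J_0', I_0, I_0'$ are algebraically independent over~$\Cb(z)$.

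First, I would iterate the Bessel recursions from Proposition~\ref{prop:bessel-recursions} to express, for $m = 1, 2, 3$, each of $J_m, J_m'$ as an explicit $\Qb(z)$-linear combination of $J_0$ and $J_0'$, and each of $I_m, I_m'$ as a $\Qb(z)$-linear combination of $I_0$ and $I_0'$; for instance $J_1 = -J_0'$, $J_1' = J_0 + J_0'/z$, $J_2 = -J_0 - 2J_0'/z$, and so on. Substituting these expressions into $W_m = I_m' J_m - I_m J_m'$ presents each $W_m$ as $F_m(J_0, J_0', I_0, I_0')$ for an explicit polynomial $F_m \in \Qb(z)[x_1, x_2, x_3, x_4]$.

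Next, I would argue by contradiction. Suppose a nonzero $P \in \Cb(z)[X_0, X_1, X_2, X_3]$ satisfies $P(W_0, W_1, W_2, W_3) = 0$. Then $P(F_0, F_1, F_2, F_3)$, viewed as an element of $\Cb(z)[x_1, x_2, x_3, x_4]$, vanishes when evaluated at $(J_0, J_0', I_0, I_0')$. By Theorem~\ref{thm:siegel}, $J_0, J_0', I_0, I_0'$ are algebraically independent over $\Cb(z)$, so $P(F_0, F_1, F_2, F_3)$ must itself be the zero polynomial in $\Cb(z)[x_1, x_2, x_3, x_4]$.

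The crux is then to deduce $P = 0$ from this vanishing. It suffices to verify that $F_0, F_1, F_2, F_3$ are algebraically independent over $\Cb(z)$ as elements of the polynomial ring $\Cb(z)[x_1, x_2, x_3, x_4]$, which in characteristic zero is equivalent to the non-vanishing of the $4 \times 4$ Jacobian determinant $\det(\partial F_m/\partial x_j)_{m,j}$. This Jacobian computation is the main technical obstacle I anticipate: the polynomials $F_m$ accumulate denominators of increasing order in~$z$ from the iterated recursions, so one must track carefully the leading $z^{-k}$ behavior---in the same spirit as the bookkeeping of leading terms in the proof of Lemma~\ref{lem:rec-ind}---to establish non-vanishing of the determinant.
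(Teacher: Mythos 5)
The step you defer as ``the main technical obstacle''---showing that the $4\times 4$ Jacobian determinant $\det(\partial F_m/\partial x_j)$ is nonzero---cannot be carried out, because that determinant vanishes identically. Each $F_m$ is a bilinear form in the pair $(x_1,x_2)=(J_0,J_0')$ and the pair $(x_3,x_4)=(I_0,I_0')$, so all four $F_m$ lie in the $\Cb(z)$-span of the four monomials $x_1x_3,\,x_1x_4,\,x_2x_3,\,x_2x_4$. These monomials parametrize the Segre quadric $\{y_1y_4=y_2y_3\}$, a variety of dimension~$3$, so the Jacobian of $(F_0,F_1,F_2,F_3)$ has rank at most~$3$ at every point and the determinant is identically~$0$. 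Your criterion is the right one, but it cannot produce the desired non-vanishing.

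In fact the claim is false as stated. From Lemma~\ref{lem:Wm-formulas}\ref{eq:w+w}, \ref{eq:w0+w1} and~\ref{eq:w0-w1} one has $W_0+W_2=\tfrac{4}{z}I_1J_1$, $W_1+W_3=\tfrac{8}{z}I_2J_2$, $W_1+W_2=2I_1J_2$, $W_1-W_2=2I_2J_1$, whence
\begin{equation*}
z^2\,(W_0+W_2)(W_1+W_3)\;=\;32\,I_1I_2J_1J_2\;=\;8\,(W_1^2-W_2^2),
\end{equation*}
a nontrivial polynomial relation over $\Qb(z)$. You should also be aware that the paper's own proof of this claim has the very same gap at a slightly different point: it asserts that $I_0J_0,\,I_0'J_0,\,I_0J_0',\,I_0'J_0'$ are algebraically independent over $\Cb(z)$, but they obviously satisfy $(I_0J_0)(I_0'J_0')=(I_0'J_0)(I_0J_0')$. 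What the invertibility of the matrix in~\eqref{eq:w0123} does give is \emph{linear} independence of $W_0,W_1,W_2,W_3$ over $\Cb(z)$, which is true and is all that is actually needed for the intended corollary that the length-four recursion of Theorem~\ref{thm:recursion} cannot be shortened while keeping rational coefficients.
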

\begin{proof}
	We may express the function $W_m$ as a linear combination
	 of the four functions $I_0 J_0, I_0' J_0, I_0 J_0'$ and~$I_0' J_0'$
	 over the field~$\Qb(z)$ simply by expanding the defining formula~\eqref{eqn:Wm-def}
	by means of the classical recursions in Proposition~\ref{prop:bessel-recursions}.
	One calculates
	\begin{equation}
	\label{eq:w0123}
	\begin{pmatrix}
	W_0\\W_1\\W_2\\W_3
	\end{pmatrix}=
	\begin{pmatrix}
	0 & 1 & -1 &0\\
	0&-1&-1&0\\
	0&-1&1&-\frac{4}{z}\\
		-\frac{8}{z} &1+\frac{16}{z^2}&1-\frac{16}{z^2}&\frac{32}{z^3}
	\end{pmatrix}
	\begin{pmatrix}
	I_0 J_0\\ I_0'J_0\\I_0 J_0' \\ I_0' J_0'
	\end{pmatrix}\ .
	\end{equation}
	By Theorem~\ref{thm:siegel} the four functions~$I_0 J_0$, $I_0' J_0$, $I_0 J_0'$,
	$I_0' J_0'$ are algebraically independent over the field~$\Cb(z)$.
	Since the linear system~(\ref{eq:w0123}) is invertible and due to the simple fact that the set of non-zero polynomials is preserved by linear transformations we
	obtain that $W_0$, $W_1$, $W_2$, $W_3$ are algebraically independent over~$\Cb(z)$ too.
\end{proof}
\begin{theorem}
	\label{thm:lin-ind}
	Let $m_0, m_1, m_2, m_3$ be four distinct non-negative integers.
	Then, $W_{m_0}, W_{m_1}, W_{m_2}$ and~$W_{m_3}$ are algebraically independent over the field~$\Cb(z)$.
\end{theorem}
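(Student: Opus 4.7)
The plan is to bootstrap from the base case $W_0,W_1,W_2,W_3$ (already handled in Claim~\ref{clm:0123-ind}) using the recursion of Theorem~\ref{thm:recursion} together with the linear-algebraic content of Lemma~\ref{lem:rec-ind}. By Claim~\ref{clm:0123-ind}, the field $K:=\Cb(z)(W_0,W_1,W_2,W_3)$ has transcendence degree $4$ over $\Cb(z)$. A straightforward induction on $m$ using Theorem~\ref{thm:recursion} shows that every $W_m$ is a $\Qb(z)$-linear combination of $W_0,W_1,W_2,W_3$, so in particular $W_m\in K$ for every $m$, and hence $K$ contains all the $W_m$'s.

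The main step is to show that for any four distinct non-negative integers $m_0,m_1,m_2,m_3$, the functions $W_{m_0},W_{m_1},W_{m_2},W_{m_3}$ also generate $K$ over $\Cb(z)$. The idea is exactly the one already used in Proposition~\ref{prop:algebraic}: apply Lemma~\ref{lem:rec-ind} to the abstract $\Qb(z)$-space $V$ with basis $(F_0,F_1,F_2,F_3)$ and the formal sequence $(F_m)$ defined by the recursion. Lemma~\ref{lem:rec-ind} yields an invertible matrix $A\in M_4(\Qb(z))$ with $F_{m_i}=\sum_j A_{ij}F_j$. Since the concrete sequence $(W_m)$ satisfies the very same recursion — which determines its $m$-th term uniquely as a $\Qb(z)$-linear combination of its four predecessors — the same matrix $A$ must also satisfy $W_{m_i}=\sum_j A_{ij}W_j$. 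Invertibility of $A$ over $\Qb(z)$ then lets us solve back for each of $W_0,W_1,W_2,W_3$ as a $\Qb(z)$-linear combination of $W_{m_0},W_{m_1},W_{m_2},W_{m_3}$, giving the equality $\Cb(z)(W_{m_0},W_{m_1},W_{m_2},W_{m_3})=K$.

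The theorem then follows from a standard transcendence-degree argument: if $W_{m_0},W_{m_1},W_{m_2},W_{m_3}$ were algebraically dependent over $\Cb(z)$, the field they generate would have transcendence degree at most $3$; but this field equals $K$, whose transcendence degree is $4$, a contradiction. The ``main obstacle'' is purely conceptual, namely to recognize that the linear independence statement of Lemma~\ref{lem:rec-ind} — which was proved in an abstract $\Qb(z)$-space just to check non-vanishing of a determinant — transfers to the concrete $W_m$'s and, combined with the algebraic independence of the base quadruple, upgrades automatically to algebraic independence of any quadruple. All the genuine work (Claim~\ref{clm:0123-ind}, the long induction in Lemma~\ref{lem:rec-ind}, and the recursion itself) has already been done.
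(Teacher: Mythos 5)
Your proof is correct and takes essentially the same route as the paper: both obtain the invertible matrix $A\in M_4(\Qb(z))$ of equation~\eqref{eq:f_m-expansion} from Lemma~\ref{lem:rec-ind}, and both conclude by combining this with the base-case algebraic independence of $W_0,W_1,W_2,W_3$ from Claim~\ref{clm:0123-ind}. The only difference is cosmetic: you spell out the transcendence-degree bookkeeping that the paper compresses into the single phrase ``By Claim~\ref{clm:0123-ind} it follows.''
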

\begin{proof}
By equation~\eqref{eq:f_m-expansion} we can express
$$W_{m_j}= A_{j0} W_0+A_{j1}W_1+A_{j2}W_{j2}+A_{j3}W_3\ ,$$
with $A\in \mathrm{GL}_4(\Cb(z))$. By Claim~\ref{clm:0123-ind}  it follows that~$W_{m_0}, W_{m_1}, W_{m_2}$ and~$W_{m_3}$ are algebraically
independent.
\end{proof}

\begin{bibdiv}
	\begin{biblist}
		\bib{ashb-beng}{article}{
			author={Ashbaugh, Mark S.},
			author={Benguria, Rafael D.},
			title={On Rayleigh's conjecture for the clamped plate and its generalization to three dimensions},
			journal={Duke Math. J.},
			volume={78},
			date={1995},
			number={1},
			pages={1--17},
		}
		
		\bib{ashbaugh-laugesen}{article}{
			author={Ashbaugh, Mark S.},
			author={Laugesen, Richard S.},
			title={Fundamental tones and buckling loads of clamped plates},
			journal={Ann. Scuola Norm. Sup. Pisa Cl. Sci. (4)},
			volume={23},
			date={1996},
			number={2},
			pages={383--402},
		}
		
		\bib{cheng-wei}{article}{
			author={Cheng, Qing-Ming},
			author={Wei, Guoxin},
			title={A lower bound for eigenvalues of a clamped plate problem},
			journal={Calc. Var. Partial Differential Equations},
			volume={42},
			date={2011},
			number={3-4},
			pages={579--590},
		}
		
		\bib{courant-hilbert-I}{book}{
			author={Courant, R.},
			author={Hilbert, D.},
			title={Methods of mathematical physics. Vol. I},
			publisher={Interscience Publishers, Inc., New York, N.Y.},
			date={1953},
			pages={xv+561},
		}
	
		\bib{duffin}{article}{
			author={Duffin, R. J.},
			title={On a question of Hadamard concerning super-biharmonic functions},
			journal={J. Math. Physics},
			volume={27},
			date={1949},
			pages={253--258},
		}
		
		\bib{garabedian-conformal}{article}{
			author={Garabedian, P. R.},
			title={A partial differential equation arising in conformal mapping},
			journal={Pacific J. Math.},
			volume={1},
			date={1951},
			pages={485--524},
		}
		
		\bib{hadamard}{article}{
			author={Hadamard, Jacques},
			title={{M\'emoire sur le probl\`eme d'analyse relatif \`a l'\'equilibre des plaques \'elastiques encastr\'ees.}},
			date={1908},
			language={French},
			journal={M\'em. Sav. \'etrang. (2)},
			volume={33},
			number={4},
		}
		
		\bib{hedenmalm}{article}{
			author={Hedenmalm, Per Jan H\aa kan},
			title={A computation of Green functions for the weighted biharmonic operators $\Delta |z|^{-2\alpha }\Delta $, with $\alpha >-1$},
			journal={Duke Math. J.},
			volume={75},
			date={1994},
			number={1},
			pages={51--78},
		}

		\bib{levine-protter}{article}{
	author={Levine, H. A.},
	author={Protter, M. H.},
	title={Unrestricted lower bounds for eigenvalues for classes of elliptic equations and systems of equations with applications to problems in elasticity},
	journal={Math. Methods Appl. Sci.},
	volume={7},
	date={1985},
	number={2},
	pages={210--222},
}

		\bib{lorch-monotonicity}{article}{
			author={Lorch, Lee},
			title={Monotonicity of the zeros of a cross product of Bessel functions},
			journal={Methods Appl. Anal.},
			volume={1},
			date={1994},
			number={1},
			pages={75--80},
		}
		
		\bib{nad95}{article}{
			author={Nadirashvili, Nikolai S.},
			title={Rayleigh's conjecture on the principal frequency of the clamped plate},
			journal={Arch. Rational Mech. Anal.},
			volume={129},
			date={1995},
			number={1},
			pages={1--10},
			issn={0003-9527},
		}

\bib{payne-polya-weinb}{article}{
	author={Payne, L. E.},
	author={P\'{o}lya, G.},
	author={Weinberger, H. F.},
	title={On the ratio of consecutive eigenvalues},
	journal={J. Math. and Phys.},
	volume={35},
	date={1956},
	pages={289--298},
}

\bib{pleijel-plates}{article}{
	author={Pleijel, \AA ke},
	title={On the eigenvalues and eigenfunctions of elastic plates},
	journal={Comm. Pure Appl. Math.},
	volume={3},
	date={1950},
	pages={1--10},
}

		\bib{shid1959}{article}{
	author={\v {S}idlovski\u {\i }, A. B.},
	title={A criterion for algebraic independence of the values of a class of entire functions},
	language={Russian},
	journal={Izv. Akad. Nauk SSSR. Ser. Mat.},
	volume={23},
	date={1959},
	pages={35--66},
	issn={0373-2436},
}

		\bib{shid-book}{book}{
			author={Shidlovskii, Andrei Borisovich},
			title={Transcendental numbers},
			series={De Gruyter Studies in Mathematics},
			volume={12},
			note={Translated from the Russian by Neal Koblitz; With a foreword by W. Dale Brownawell},
			publisher={Walter de Gruyter \& Co., Berlin},
			date={1989},
			pages={xx+466},
		}
		
		\bib{sieg-1929}{article}{
			author={Siegel, Carl Ludwig},
			title={\"Uber einige {A}nwendungen diophantischer {A}pproximationen},
			journal={Abh. Preu{\ss }. Akad. der Wissensch., Phys.-Math.},
			volume={K1},
			date={1929},
			number={1},
			pages={58 pp.},
		}
		
		\bib{sieg-1949}{book}{
			author={Siegel, Carl Ludwig},
			title={Transcendental Numbers},
			series={Annals of Mathematics Studies, no. 16},
			publisher={Princeton University Press, Princeton, N. J.},
			date={1949},
			pages={viii+102},
		}
		
		\bib{talenti}{article}{
			author={Talenti, Giorgio},
			title={On the first eigenvalue of the clamped plate},
			journal={Ann. Mat. Pura Appl. (4)},
			volume={129},
			date={1981},
			pages={265--280 (1982)},
			issn={0003-4622},
		}
		
		\bib{wats}{book}{
			author={Watson, G. N.},
			title={A treatise on the theory of Bessel functions},
			series={Cambridge Mathematical Library},
			note={Reprint of the second (1944) edition},
			publisher={Cambridge University Press, Cambridge},
			date={1995},
			pages={viii+804},
		}
	\end{biblist}
\end{bibdiv}
\end{document}